\newtheorem{theorem}{Theorem}[section]
\newtheorem{lemma}[theorem]{Lemma}
\newtheorem{corollary}[theorem]{Corollary}
\newtheorem{proposition}[theorem]{Proposition}
\newtheorem{example}[theorem]{Example}
\theoremstyle{definition}
\newtheorem{definition}[theorem]{Definition}
\newtheorem{remark}{Remark}[section]
\numberwithin{equation}{section}
\begin{document}

\title[On $\alpha$-monotone operators and their resolvent in Banach spaces]
{{\bf On $\alpha$-monotone operators and their resolvent in Banach spaces}}

\author{ Changchi Huang, Jigen Peng, Yuchao Tang$^*$ }

\address{ Changchi Huang:  School of Mathematics And Information Science, Guangzhou University,
Guangzhou, 510006, China}
\email{cchuang@gzhu.edu.cn\;\;(C. Huang)}
\address{ Jigen Peng:   School of Mathematics And Information Science, Guangzhou University,
Guangzhou, 510006, China}
\email{jgpeng@gzhu.edu.cn\;\;(J.P. Peng)}
\address{ Yuchao Tang$^*$:   School of Mathematics And Information Science, Guangzhou University,
Guangzhou, 510006, China}
\email{yctang@gzhu.edu.cn\;\;(Y.C. Tang)}

\thanks{$^*$ The corresponding author. This work was
supported by the National Natural Science Foundations of China (12031003, 12571558, 12571491), the Guangzhou Education Scientific Research Project 2024 (202315829) and the Jiangxi Provincial Natural Science Foundation (20224ACB211004), and the Postdoctoral Startup Foundation of Guangdong Province (62402153). }


\begin{abstract}
 This paper introduces a new definition of $\alpha$-monotone operators in real 2-uniformly convex and smooth Banach spaces. Based on this new definition, we establish several novel structural and analytical properties of such operators, which not only extend classical results from Hilbert spaces but also reveal new insights into the geometry of Banach spaces. In particular, we examine the resolvent of $\alpha$-maximal monotone operators and demonstrate how its behavior is consistent with, and generalizes, the well-known firmly nonexpansive property in the Hilbert space setting. Building upon this theoretical framework, we further investigate algorithmic applications. Specifically, we analyze the forward-reflected-backward splitting algorithm under the new $\alpha$-monotonicity assumption and prove its strong convergence as well as its $R$-linear convergence rate in real 2-uniformly convex and smooth Banach spaces.
\end{abstract}

\keywords{ $\alpha$-monotone operator; resolvent operator; 2-uniformly convex Banach space; smooth Banach space.}

\subjclass[2010]{47H10; 47H05; 47J25.}

\maketitle

\section{Introduction}

Let $X$ be a real Banach space and $X^{*}$ its dual space of $X$. Many problems arising in economics, mechanics, signal and image processing, and machine learning can be formulated as finding a point $x\in X$ satisfying the inclusion
\begin{equation}\label{single-monotone-inclusion}
\textrm{find } x\in X \quad \textrm{such that } 0\in Ax,
\end{equation}
where $A:X \rightarrow 2^{X^{*}}$ is a maximally monotone operator. A widely used iterative method for solving (\ref{single-monotone-inclusion}) is the proximal point algorithm (PPA), originally introduced by Martinet \cite{MB1970} and Rockafellar \cite{RT1976}. The PPA generates a sequence $x_n$ via
\begin{equation}
x_{n+1} = J_{\lambda A}x_n,
\end{equation}
where $\lambda >0$ and $J_{\lambda A}=(I+\lambda A)^{-1}$ denotes the resolvent operator of $A$. The PPA and its variants have been extensively studied in Hilbert spaces. See, e.g., \cite{GO1991,SS2000,AA2001,IL2022}  and the references therein. In particular, for a maximally monotone operator $A$, its resolvent operator $J_{\lambda A}$ is known to be firmly nonexpansive in Hilbert spaces. This property directly ensures the convergence of the PPA through the classical the Krasnoselskii-Mann iterative framework. In contrast to the case of Hilbert spaces, the resolvent of a maximally monotone operator is not, in general, a firmly nonexpansive mapping in Banach spaces. To address this issue, Kohsaka and Takahashi \cite{KT2008} introduced the class of firmly nonexpansive type mappings in Banach spaces. Specifically, they showed that if $A$ is a maximally monotone operator on a smooth, strictly convex, and reflexive Banach space $X$ such that $J:X\rightarrow X^{*}$ is the normalized duality mapping, $\lambda >0$, and $J_{\lambda A}= (J + \lambda A)^{-1}J$, then $J_{\lambda A}$ is a firmly nonexpansive type from a nonempty closed convex subset $C$ of $X$ onto the domain $D(A)$ of $A$. Further results and detailed discussions concerning monotone operators in Banach spaces are available in the existing literature \cite{BCA,SY,SYC,TP,TPS,TPCP}

Recently, Dao and Phan \cite{DP2019} introduced the concept of $\alpha$-monotone operators (See Definition \ref{classical def}) in Hilbert spaces. They studied various properties of $\alpha$-monotone operators with and without Lipschitz assumptions. Especially, the resolvent of $\alpha$-monotone operators is investigated in the Hilbert spaces setting. Let $A:X \rightarrow 2^{X^{*}}$, then Definition \ref{classical def} is also well-defined in Banach spaces. In particular, for $\alpha =0$, the $\alpha$-monotone operators reduce to the monotone operators, which was studied by \cite{KT2008} in Banach spaces.  For both $\alpha >0$ and $\alpha <0$, the approach employed in \cite{KT2008} fails to produce resolvent operator properties analogous to those associated with $\alpha$-monotone operators in Hilbert spaces of \cite{DP2019}. These limitations arise from a fundamental misalignment: the naive definition of $\alpha$-monotonicity is tied to the Hilbert space inner product, which simultaneously encodes the spaces norm and its duality. In Banach spaces, however, this duality is decoupled from the norm, and the normalized duality mapping $J:X\rightarrow X^{*}$ a central geometric object that generalizes the role of the inner product by linking elements of with their duals in $X^{*}$ must be explicitly incorporated in order to capture the intrinsic structure of the space. Without this incorporation, the notion of $\alpha$-monotonicity fails to reflect key geometric features such as convexity and smoothness. Consequently, the naive definition becomes inadequate for analyzing monotone operators and for developing or studying algorithms in the Banach space setting.

To address these gaps, we propose a novel definition of $\alpha$-monotonicity for operators in smooth Banach spaces (See Definition \ref{our def}) that explicitly integrates the normalized duality mapping $J$, aligning the operators monotonicity with the Banach spaces geometry. This definition resolves the shortcomings of the naive generalization: by replacing the Hilbert space norm squared \(\|x - y\|^2\) in Definition \ref{classical def} with the duality pairing \(\langle x - y, Jx - Jy \rangle\), it inherently encodes the Banach space's smoothness and convexity properties. As a result, in Hilbert spaces, $J$ reduces to the identity mapping, so Definition \ref{our def} recovers the standard $\alpha$-monotonicity-ensuring backward compatibility. For general smooth Banach spaces, the term \(\langle x - y, Jx - Jy \rangle\) acts as a ``Banach-space-adapted inner product proxy," enabling meaningful quantification of monotonicity that aligns with the spaces geometry. We summarize the main contributions of this paper as follows:

(i) We provide a systematic characterization of $\alpha$-monotone operators in smooth Banach spaces. In particular, we clarify their fundamental properties, establish their connections with existing operator classes such as maximally monotone and strongly monotone operators, and identify sufficient conditions under which $\alpha$-monotone operators are maximal.

(ii) We demonstrate that the resolvent associated with $\alpha$-monotone operators in real 2-uniformly convex and uniformly smooth Banach spaces preserves key contractive-type properties. These properties serve as natural analogues of the contractive behavior of resolvents in Hilbert spaces, thereby showing that our framework extends classical results in a consistent and meaningful way.

(iii) We further apply the concept of $\alpha$-monotonicity to analyze the forward-reflected-backward splitting algorithm in Banach spaces. Under the assumption that the ``strong convexity outweighs weak convexity" condition holds, we establish both strong convergence and $R$-linear convergence of the algorithm. This provides new theoretical guarantees for the efficiency and robustness of operator-splitting methods beyond the Hilbert space setting.

The remainder of the paper is organized as follows. Section 2 reviews preliminary concepts from Banach space geometry and monotone operator theory. Section 3 introduces our definition of $\alpha$-monotonicity, establishes its fundamental properties, and investigates the resolvent of $\alpha$-monotone operators, deriving corresponding contractivity results. Section 4 applies the proposed framework to the forward-reflected-backward splitting algorithm and proves the convergence of the associated algorithm in Banach spaces.

\section{Preliminaries}

Throughout this paper, every Banach space is assumed to be real. Let $\mathbb{N}$ and $\mathbb{R}$ denote the sets of positive integers and real numbers, respectively. For a sequence \({x_n}\) in a Banach space $X$, the strong convergence and the weak convergence of ${x_{n}}$ to $x \in X$ are denoted by \(x_{n} \to x\) and \(x_{n} \rightharpoonup x\), respectively.

A Banach space $X$ is said to be smooth if
\begin{align}\label{smooth}
  \lim _{t \to 0} \frac{\| x+t y\| -\| x\| }{t}
\end{align}
exists for all \(x, y \in S_X\), where \(S_X\) is the unit sphere of $X$. A Banach space $X$ is said to be strictly convex if
\begin{align}
  \left\|\frac{x+y}{2} \right\|<1
\end{align}
whenever \(x, y \in S_X\) and \(x \neq y\).

The modulus of convexity of \(X\), is
\[\delta_{X}(\varepsilon)=\inf \left\{1-\frac{\| x+y\| }{2}:\| x\| =\| y\| =1,\| x-y\| =\varepsilon\right\},\]
where the infimum can be taken over all $\|x\|\leq 1, \|y\|\leq 1$. The
modulus of smoothness is
\[\rho_{X}(\tau)=sup \left\{\frac{\| x+\tau y\| +\| x-\tau y\| }{2}-1: \| x\| =\| y\| =1\right\},\]
where the supremum can be taken over all $\|x\|\leq 1, \|y\|\leq 1$.

A Banach space X is called uniformly convex if \(\delta_{X}(\varepsilon)>0\) for all \(\varepsilon>0\), and is called q-uniformly convex \((2 \leq q<\infty)\) ) if there exists a constant \(C>0\) such that \(\delta_{X}(\varepsilon) \geq C \varepsilon^{q}\) for all \(\varepsilon>0\). X is called uniformly smooth if \(\rho_{X}(\tau) / \tau \to 0\) as \(\tau \to 0\), and is called p-uniformly smooth \((1<p \leq 2)\) if there exists a constant \(K>0\) such that \(\rho_{X}(\tau) \leq K \tau^{p}\) for all \(\tau>0\).


Let $X$ be a Banach space and let \(J: X \to X^{*}\) be the normalized duality mapping defined by
\begin{align}\label{dual map}
  J x=\left\{x^{*} \in X^{*}:\left< x, x^{*}\right>=\| x\| ^{2}=\left\| x^{*}\right\| ^{2}\right\}
\end{align}
for all \(x \in X\).

We know the following properties of $J$ (See for example Cioranescu \cite{CI}, and Takahashi \cite{TW} ) :

1. If $X$ is smooth, then $J$ is single-valued;

2. If $X$ is reflexive, then $J$ is onto;

3. If $X$ is strictly convex, then $J$ is one-to-one; that is, \(J x \cap J y \neq \emptyset\) implies that $x=y$;

4. If $X$ is strictly convex, then $J$ is strictly monotone, that is, if ( \((x, x^{*})\) , \((y, y^{*}) \in \mathrm{gra}J\) and \(\langle x-y, x^{*}-y^{*}\rangle=0\), then \(x=y\).

Let $X$ be a smooth Banach space. Following Alber \cite{AY} and Kamimura and Takahashi \cite{KT}, let \(\phi: X \times X \to \mathbb{R}\) be the mapping defined by
\begin{align}\label{phi}
  \phi(x, y)=\| x\|^{2} - 2\langle x, Jy\rangle + \| y\|^{2}
\end{align}
for all \(x, y \in X\). Note that $\phi$ is the Bregman distance corresponding to \(\|\cdot\|^{2}\); see Bregman \cite{BM}, Butnariu and Iusem \cite{BI}, and Censor and Lent \cite{CL}. If $X$ is a Hilbert space, then we have \(\phi(x, y)=\|x-y\|^{2}\) for all \(x, y \in X\). We know that
\begin{align}
  (\| x\| -\| y\| )^{2} \leq \phi(x, y) \leq(\| x\| +\| y\| )^{2}
\end{align}
for all \(x,y \in X\). If $X$ is strictly convex, then
\begin{align}
  \phi(x, y)=0 \Leftrightarrow x=y.
\end{align}






\begin{lemma}\label{phiequ}\cite{AY, AK, KT2}
  Let $X$ be a real smooth Banach space. Then the following identities hold:

 \emph{(i)} $\phi(x,y) = \phi(x,z) + \phi(z,y) + 2\langle x-z, Jz-Jy\rangle, \forall x,y,z\in X$.

  \emph{(ii)} $\phi(x,y) + \phi(y,x) = 2\langle x-y, Jx-Jy\rangle, \forall x,y\in X$.

  \emph{(iii)} $\langle x-y, Jz-Jw \rangle = \frac{1}{2}\{\phi(x,w) + \phi(y,z) - \phi(x,z) - \phi(y,w)\}, \forall x,y,z,w \in X$.
\end{lemma}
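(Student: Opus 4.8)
The plan is to treat all three identities as purely algebraic consequences of the definition (\ref{phi}) of $\phi$; no geometric input is needed beyond the standing smoothness hypothesis, which is used only to guarantee that $J$ is single-valued, so that each pairing of the form $\langle\,\cdot\,, Ju\rangle$ is unambiguous. The two ingredients I would invoke throughout are the linearity of the duality pairing $\langle\,\cdot\,,\,\cdot\,\rangle$ in its first argument and the elementary identity $\langle u, Ju\rangle = \|u\|^{2}$ for every $u \in X$, which follows at once from (\ref{dual map}).

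I would establish (i) first, by expanding its right-hand side. Substituting (\ref{phi}) into $\phi(x,z)$, $\phi(z,y)$ and $2\langle x-z,\,Jz-Jy\rangle$ and collecting terms, the two contributions $\pm 2\langle x, Jz\rangle$ cancel, the two contributions $\pm 2\langle z, Jy\rangle$ cancel, and the leftover $2\|z\|^{2} - 2\langle z, Jz\rangle$ vanishes because $\langle z, Jz\rangle = \|z\|^{2}$; what survives is exactly $\|x\|^{2} - 2\langle x, Jy\rangle + \|y\|^{2} = \phi(x,y)$.

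With (i) in hand, I would deduce (ii) and (iii) as corollaries rather than recompute from scratch. Write (i) in the relabelled form $\phi(a,c) = \phi(a,b) + \phi(b,c) + 2\langle a-b,\,Jb-Jc\rangle$ for all $a,b,c \in X$. Taking $a = c = x$ and $b = y$ gives $\phi(x,x) = \phi(x,y) + \phi(y,x) + 2\langle x-y,\,Jy-Jx\rangle$; since $\phi(x,x) = 2\|x\|^{2} - 2\langle x, Jx\rangle = 0$, this rearranges to (ii). For (iii), take $(a,b,c) = (x,z,w)$ and then $(a,b,c) = (y,z,w)$ and subtract the two resulting identities: the common summand $\phi(z,w)$ cancels, and the difference of the two pairing terms collapses, by linearity in the first slot, to $2\langle x-y,\,Jz-Jw\rangle$, which is precisely (iii) after dividing by $2$. (Each of (i)--(iii) could equally well be verified by the same brute-force expansion, so routing (ii) and (iii) through (i) is merely a matter of economy.)

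I do not expect any genuine obstacle here: the whole content is the careful bookkeeping of cancellations among the norm-squared terms and the pairing terms, together with the repeated use of $\langle u, Ju\rangle = \|u\|^{2}$. This is exactly why the lemma is standard and is quoted from \cite{AY, AK, KT2}; the only point worth flagging is that smoothness of $X$ is what makes the single-valued $J$ --- and hence every expression appearing in the statement --- well defined in the first place.
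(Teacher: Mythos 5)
Your proposal is correct: the direct expansion of the definition of $\phi$ together with $\langle u, Ju\rangle = \|u\|^2$ establishes (i), and your derivations of (ii) and (iii) from (i) by specializing and subtracting are valid. The paper itself gives no proof, quoting the lemma from the cited references, and your argument is exactly the standard algebraic verification those sources use, so there is nothing to add.
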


\begin{lemma}\label{2uc}\cite{BB,BK,THK}
   The space $X$ is 2-uniformly convex if and only if there exists \(\mu \geq 1\) such that
\begin{align}\label{mu}
  \frac{\| x+y\| ^{2}+\| x-y\| ^{2}}{2} \geq\| x\| ^{2}+\left\| \mu^{-1} y\right\| ^{2}
\end{align}
for all \(x, y \in X\).
 \end{lemma}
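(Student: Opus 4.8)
The plan is to prove the two implications of the equivalence separately; one is immediate, and the other contains all of the work.

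First, assume (\ref{mu}) holds with some $\mu\ge1$, and let $u,v\in S_X$ with $\|u-v\|=\varepsilon$. Applying (\ref{mu}) with $x=\tfrac{u+v}{2}$ and $y=\tfrac{u-v}{2}$, so that $x+y=u$ and $x-y=v$, the left-hand side equals $\tfrac{\|u\|^{2}+\|v\|^{2}}{2}=1$ while the right-hand side equals $\bigl\|\tfrac{u+v}{2}\bigr\|^{2}+\mu^{-2}\tfrac{\varepsilon^{2}}{4}$. Hence $\bigl\|\tfrac{u+v}{2}\bigr\|^{2}\le 1-\tfrac{\varepsilon^{2}}{4\mu^{2}}$, and since $\tfrac{\varepsilon^{2}}{4\mu^{2}}\le1$ (as $\varepsilon\le2$ and $\mu\ge1$) and $\sqrt{1-t}\le1-\tfrac t2$ on $[0,1]$, this gives $\bigl\|\tfrac{u+v}{2}\bigr\|\le1-\tfrac{\varepsilon^{2}}{8\mu^{2}}$. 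Taking the infimum over such $u,v$ yields $\delta_X(\varepsilon)\ge\tfrac{1}{8\mu^{2}}\,\varepsilon^{2}>0$, so $X$ is $2$-uniformly convex.

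Conversely, assume $\delta_X(\varepsilon)\ge C\varepsilon^{2}$ for all $\varepsilon\in(0,2]$; shrinking $C$, we may suppose $C\le\tfrac14$. It suffices to produce $\gamma>0$ with
\[
\Bigl\|\tfrac{u+v}{2}\Bigr\|^{2}\ \le\ \frac{\|u\|^{2}+\|v\|^{2}}{2}-\gamma\|u-v\|^{2}\qquad(u,v\in X),
\]
because then applying this inequality with $u=x+y$ and $v=x-y$ gives exactly (\ref{mu}) with $\mu^{-2}=4\gamma$. Both sides above are homogeneous of degree $2$ and symmetric in $u\leftrightarrow v$, so we may assume $\|u\|^{2}+\|v\|^{2}=2$ and $p:=\|u\|\ge q:=\|v\|$, hence $p\ge1\ge q$. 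If $q=0$ the inequality reduces to $\gamma\le\tfrac14$; so assume $q>0$ and set $\hat u=u/p$, $\hat v=v/q\in S_X$, $d=\|\hat u-\hat v\|\in[0,2]$, $\lambda=\tfrac{p}{p+q}\ge\tfrac12$. Then $\tfrac{u+v}{2}=\tfrac{p+q}{2}\bigl(\lambda\hat u+(1-\lambda)\hat v\bigr)$, and the standard convexity estimate $\|\lambda\hat u+(1-\lambda)\hat v\|\le1-2(1-\lambda)\delta_X(d)\le1-\tfrac{2qC}{p+q}\,d^{2}$, combined with $(1-t)^{2}\le1-t$ for $t\in[0,1]$, yields
\[
\Bigl\|\tfrac{u+v}{2}\Bigr\|^{2}\ \le\ \frac{(p+q)^{2}}{4}-\frac{(p+q)qC}{2}\,d^{2}.
\]
Since $\tfrac{\|u\|^{2}+\|v\|^{2}}{2}-\tfrac{(p+q)^{2}}{4}=\tfrac{(p-q)^{2}}{4}$ and, from $u-v=q(\hat u-\hat v)+(p-q)\hat u$ with $2ab\le a^{2}+b^{2}$, $\|u-v\|^{2}\le2q^{2}d^{2}+2(p-q)^{2}$, it now suffices that $2\gamma(p-q)^{2}\le\tfrac14(p-q)^{2}$ and $2\gamma q^{2}\le\tfrac{(p+q)qC}{2}$; since $p+q\ge2q$ and $C\le\tfrac14$, both hold for $\gamma=\tfrac C2$, giving (\ref{mu}) with $\mu=(2C)^{-1/2}$.

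The first implication is routine. The heart of the argument is the second, and its genuine difficulty is \emph{homogenization}: the modulus of convexity controls only convex combinations of vectors lying on the unit sphere, whereas (\ref{mu}) must hold for arbitrary $u,v$ whose norms may differ widely. The computation above circumvents this by writing $\tfrac{u+v}{2}$ as a positive scalar multiple of a convex combination of the normalized vectors $\hat u,\hat v$, and then splitting $u-v$ into a ``tangential'' part $q(\hat u-\hat v)$, absorbed into the gain $\tfrac{(p+q)qC}{2}d^{2}$, and a ``radial'' part $(p-q)\hat u$, absorbed into $\tfrac14(p-q)^{2}$; the only delicate point is keeping the various constants compatible, which is what forces the harmless preliminary reduction $C\le\tfrac14$. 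Alternatively, one can obtain (\ref{mu}) from the duality between $2$-uniform convexity of $X$ and $2$-uniform smoothness of $X^{*}$ together with a Young-type inequality, but the direct modulus-of-convexity route is self-contained and produces an explicit $\mu$.
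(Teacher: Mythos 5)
Your proof is correct; note that the paper itself offers no proof of this lemma — it is quoted from the cited literature (Beauzamy, Ball--Carlen--Lieb, Takahashi--Hashimoto--Kato) — so the comparison is with those sources rather than with an in-paper argument. Your forward implication (specializing (\ref{mu}) to $x=\tfrac{u+v}{2}$, $y=\tfrac{u-v}{2}$ and using $\sqrt{1-t}\le 1-\tfrac t2$) is the routine one. The converse is where your write-up adds content: you obtain the two-point inequality directly from the modulus bound $\delta_X(\varepsilon)\ge C\varepsilon^2$ by homogenization, writing $\tfrac{u+v}{2}$ as a positive multiple of a convex combination of the normalized vectors and splitting $u-v$ into a tangential part $q(\hat u-\hat v)$ and a radial part $(p-q)\hat u$. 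I checked the bookkeeping: $t=\tfrac{2qC}{p+q}d^2\le 1$ thanks to $C\le\tfrac14$, $d\le 2$ and $p\ge q$; the two sufficient conditions hold at $\gamma=\tfrac C2$; and $\mu=(2C)^{-1/2}\ge\sqrt2\ge 1$, as the lemma requires. The cited references instead prove the equivalence with sharp constants (identifying the best $\mu$ with the $2$-uniform convexity constant $\mu_X$), typically via duality with $2$-uniform smoothness of $X^*$ or Clarkson/Ball--Carlen--Lieb type inequalities; your route is more elementary and self-contained, at the price of a non-sharp $\mu$, which is all that the statement asks for. One small polish: the ``standard convexity estimate'' $\|\lambda\hat u+(1-\lambda)\hat v\|\le 1-2(1-\lambda)\delta_X(d)$ for $\lambda\ge\tfrac12$ is the only place the modulus enters, so it merits its one-line justification: write $\lambda\hat u+(1-\lambda)\hat v=(2\lambda-1)\hat u+2(1-\lambda)\tfrac{\hat u+\hat v}{2}$ and apply the triangle inequality together with $\bigl\|\tfrac{\hat u+\hat v}{2}\bigr\|\le 1-\delta_X(d)$.
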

The minimum value of the set of all \(\mu \geq 1\) satisfying (\ref{mu}) for all \(x, y \in X\) is denoted by \(\mu_{X}\) and is called the 2-uniform convexity constant of $X$. It is obvious that \(\mu_{X}=1\) when $X$ is a Hilbert space.

\begin{lemma}\label{phiinequ1}\cite{AK}
  Suppose that Banach space $X$ is 2-uniformly convex and smooth. Then there exists $\mu>1$ such that
\[\frac{1}{\mu}\| x-y\|^{2} \leq \phi(x, y) \]
for all \(x, y \in X\).
\end{lemma}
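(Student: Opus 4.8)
The plan is to obtain the estimate directly from the norm characterization of $2$-uniform convexity in Lemma~\ref{2uc}, bridging the gap between that purely metric statement and the definition $\phi(x,y)=\|x\|^{2}-2\langle x,Jy\rangle+\|y\|^{2}$ by a one-line comparison between the duality pairing $\langle x,Jy\rangle$ and the norm $\|x+y\|$. Since smoothness makes $J$ single-valued, all the quantities below are well defined, and the only term in $\phi(x,y)$ that is not a pure norm is the pairing $\langle x,Jy\rangle$; controlling it from above is the whole content of the argument.

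First I would rewrite \eqref{mu} in an equivalent, more convenient form: with $\mu=\mu_X\ge 1$ the $2$-uniform convexity constant, applying \eqref{mu} to the pair $u=\tfrac{x+y}{2}$, $v=\tfrac{x-y}{2}$ (so that $u+v=x$ and $u-v=y$) and multiplying through by $2$ gives
\[
\|x\|^{2}+\|y\|^{2}\ \ge\ \frac{\|x+y\|^{2}}{2}+\frac{\|x-y\|^{2}}{2\mu^{2}},\qquad\forall\,x,y\in X .
\]
Next, for arbitrary $x,y\in X$, writing $\langle x,Jy\rangle=\langle x+y,Jy\rangle-\langle y,Jy\rangle=\langle x+y,Jy\rangle-\|y\|^{2}$ and using $\langle x+y,Jy\rangle\le\|x+y\|\,\|y\|$ (valid since $\|Jy\|=\|y\|$), I would set $s=\|x+y\|$, $t=\|y\|$ and invoke the elementary identity $2st-2t^{2}=\tfrac12 s^{2}-\tfrac12(s-2t)^{2}$ to conclude
\[
2\langle x,Jy\rangle\ \le\ 2st-2t^{2}\ \le\ \frac{\|x+y\|^{2}}{2}.
\]
Combining the two displays,
\[
\phi(x,y)=\|x\|^{2}+\|y\|^{2}-2\langle x,Jy\rangle\ \ge\ \|x\|^{2}+\|y\|^{2}-\frac{\|x+y\|^{2}}{2}\ \ge\ \frac{1}{2\mu^{2}}\,\|x-y\|^{2},
\]
and taking $\mu':=2\mu_X^{2}$, which satisfies $\mu'\ge 2>1$ because $\mu_X\ge 1$, yields $\tfrac{1}{\mu'}\|x-y\|^{2}\le\phi(x,y)$ for all $x,y\in X$.

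There is no genuinely hard analytic step; the only point that needs care is that the comparison $2\langle x,Jy\rangle\le\tfrac12\|x+y\|^{2}$ must be used in the correct direction, and it is in fact uniform in $x,y$ — when $\langle x,Jy\rangle<0$ the bound is trivial, and otherwise the perfect-square identity does the work. A more conceptual alternative would be to note that Lemma~\ref{2uc} is exactly the statement that $\tfrac12\|\cdot\|^{2}$ is strongly convex with $\phi$ as its associated Bregman distance, so that the claimed lower bound is the standard strong-convexity estimate; the computation above is simply an unpacking of this fact that avoids invoking any differentiability of the norm beyond the single-valuedness of $J$ guaranteed by smoothness.
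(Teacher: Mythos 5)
Your proof is correct, and it is worth noting that the paper itself does not prove this lemma at all: it is quoted from \cite{AK}. Your argument is therefore a genuinely self-contained alternative. Each step checks out: applying \eqref{mu} to $u=\tfrac{x+y}{2}$, $v=\tfrac{x-y}{2}$ does give $\|x\|^{2}+\|y\|^{2}\ge \tfrac12\|x+y\|^{2}+\tfrac{1}{2\mu_X^{2}}\|x-y\|^{2}$; the bound $2\langle x,Jy\rangle\le 2\|x+y\|\,\|y\|-2\|y\|^{2}\le \tfrac12\|x+y\|^{2}$ follows from $\|Jy\|=\|y\|$ and your perfect-square identity (and needs no case distinction on the sign of $\langle x,Jy\rangle$); combining gives $\phi(x,y)\ge \tfrac{1}{2\mu_X^{2}}\|x-y\|^{2}$, which meets the statement since only some $\mu>1$ is required. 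The standard route — the one mirrored in this paper's own proof of the dual estimate $\phi(x,y)\le K^{2}\|x-y\|^{2}$ for $2$-uniformly smooth spaces (the lemma following Lemma~\ref{2us}) — instead iterates the convexity inequality dyadically and uses the smoothness limit defining $\langle x-y,Jy\rangle$ to pass to the derivative, which yields the sharper constant $\mu_X^{2}$ in place of your $2\mu_X^{2}$ (so that in a Hilbert space it recovers $\phi(x,y)=\|x-y\|^{2}$ up to the exact constant). Your approach trades that factor of $2$ for a shorter argument that avoids the induction and the limit, using smoothness only through the single-valuedness of $J$; both are legitimate, and for the lemma as stated the loss of sharpness is immaterial. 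Your closing remark identifying Lemma~\ref{2uc} with strong convexity of $\tfrac12\|\cdot\|^{2}$ and $\phi$ with its Bregman distance is a reasonable heuristic, though strictly speaking the Bregman distance of $\tfrac12\|\cdot\|^{2}$ is $\tfrac12\phi$ and the midpoint inequality needs continuity to upgrade to full strong convexity; since it is offered only as an aside, this does not affect the proof.
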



\section{$\alpha$-monotone operators in Banach spaces}

In this section, we introduce a new definition of \(\alpha\)-monotone operators in Banach spaces and compare it with the classical version. And then, we derive some basic properties of these new \(\alpha\)-monotone operators.

\subsection{New definition of monotone operators in Banach spaces}
In this subsection, we introduce a new definition of \(\alpha\)-monotone operators in Banach spaces and compare it with the classical version. Specifically, an operator that is monotone in the sense of our definition is not equivalent to one that is monotone in the sense of the classical definition. We further show that in a 2-uniformly convex and smooth Banach space, an \(\alpha\)-monotone operator under our definition is equivalent to that under the classical version if and only if the space is 2-uniformly smooth. We also establish that strong monotonicity of an operator in our definition implies strong monotonicity in the classical definition, though the converse does not hold; similarly, weak monotonicity in the classical definition implies weak monotonicity in our definition, with the converse also failing.

Now we recall the classical version of $\alpha$-monotonicity operators in Banach spaces.

\begin{definition}[Classical $\alpha$-monotonicity operators in Banach spaces]\label{classical def}
   An operator \(A: X \rightrightarrows X^*\) is said to be $\alpha$-monotone \((\alpha\in \mathbb{R})\) if
\[ \forall(x, u),(y, v) \in~\mathrm{gra}~A, \quad\langle x-y, u-v\rangle \geq \alpha\| x-y\|^2.\]
\end{definition}

If Banach space $X$ is smooth, then the dual mapping $J: X\to X^*$ is single-valued. We can define a new definition of $\alpha$-monotonicity operators in smooth Banach spaces as follows.
\begin{definition}[$\alpha$-monotonicity operators in smooth Banach
spaces]\label{our def}
   Let $X$ be a smooth Banach space. An operator \(A: X \rightrightarrows X^*\) is said to be $\alpha$-monotone \((\alpha \in \mathbb{R})\) if
\[ \forall(x, u),(y, v) \in~\mathrm{gra}~A, \quad\langle x-y, u-v\rangle \geq \alpha\langle x-y, Jx-Jy\rangle.\]
 \end{definition}
The constant $\alpha$ is referred to as the monotonicity constant.
We say that $A$ is monotone if \(\alpha=0\), strongly monotone if \(\alpha>0\) , and weakly monotone if \(\alpha<0\). The operator $A$ is said to be maximally $\alpha$-monotone if it is $\alpha$-monotone and there is no $\alpha$-monotone operator \(B: X \rightrightarrows X^*\) such that $\mathrm{gra}~B$ properly contains $\mathrm{gra}~A$.

\begin{remark}\label{equi case}
  It is straightforward to show that in Banach spaces, an operator being monotone in the sense of Definition \ref{our def} ($\alpha =0$) is equivalent to its being monotone in the sense of Definition \ref{classical def}. Hence, for a monotone operator, the property of being maximal is also equivalent under both Definition \ref{our def} and Definition \ref{classical def}.
\end{remark}

\begin{example}
  Let $X$ be a smooth Banach space. The dual mapping $J: X\to X^*$ is 1-monotone under Definition \ref{our def}, hence $J$ is strong monotone under Definition \ref{our def}. We know that if $f(x) = \frac{1}{2}\|x\|^2$, then $\nabla f = J$, hence $\nabla f$ is strong monotone under Definition \ref{our def}.
\end{example}

\begin{theorem}\label{main inequa1}
  Let $X$ be a real smooth and $2$-uniformly convex Banach space. Then, there exists \(\mu \geq 1\) such that
\[\frac{1}{2\mu}\| x-y\| ^{2} \leq \langle x-y, Jx-Jy \rangle,~~ \forall x, y \in X .\]
\end{theorem}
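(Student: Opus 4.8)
The plan is to rewrite the bilinear expression $\langle x-y,\,Jx-Jy\rangle$ in terms of the Bregman-type function $\phi$ and then feed it into the 2-uniform convexity estimate for $\phi$ recorded in Lemma \ref{phiinequ1}. So the argument will be essentially a short combination of two previously established identities/inequalities rather than anything requiring new geometric input.

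First I would invoke the symmetrization identity of Lemma \ref{phiequ}(ii), which gives
\[
\phi(x,y)+\phi(y,x) = 2\langle x-y,\,Jx-Jy\rangle \qquad \forall x,y\in X .
\]
This reduces the target quantity to $\tfrac12\bigl(\phi(x,y)+\phi(y,x)\bigr)$. Next, since $X$ is smooth and 2-uniformly convex, Lemma \ref{phiinequ1} supplies a constant $\mu>1$ with $\tfrac1\mu\|x-y\|^2\le\phi(x,y)$ for all $x,y$; and the elementary bound $(\|x\|-\|y\|)^2\le\phi(y,x)$ already stated in the preliminaries shows $\phi(y,x)\ge 0$. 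Combining these,
\[
2\langle x-y,\,Jx-Jy\rangle = \phi(x,y)+\phi(y,x) \ge \phi(x,y) \ge \tfrac1\mu\|x-y\|^2 ,
\]
which yields exactly $\tfrac{1}{2\mu}\|x-y\|^2\le\langle x-y,\,Jx-Jy\rangle$. (One could instead keep both summands and use $\|y-x\|=\|x-y\|$ to get the sharper constant $\tfrac1\mu$; the statement as given is the cleaner, slightly weaker form, and the constant $\mu$ can be taken to be the one from Lemma \ref{phiinequ1}, i.e.\ related to the 2-uniform convexity constant $\mu_X$.)

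Since the conclusion follows in two lines from the cited lemmas, there is no real obstacle here. The only place where genuine work would appear is if one wanted a self-contained proof that does not cite Lemma \ref{phiinequ1}: then one would have to derive $\phi(x,y)\ge\tfrac1{\mu_X}\|x-y\|^2$ directly from the parallelogram-type inequality of Lemma \ref{2uc} together with the definition $\phi(x,y)=\|x\|^2-2\langle x,Jy\rangle+\|y\|^2$, and that estimation step is where the 2-uniform convexity is actually used. Given the lemmas already available in the excerpt, this is unnecessary, so I would simply present the short combination above.
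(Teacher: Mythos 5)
Your proposal is correct and follows essentially the same route as the paper's proof: apply the symmetrization identity of Lemma \ref{phiequ}(ii), drop the nonnegative term $\phi(y,x)$, and invoke Lemma \ref{phiinequ1} to bound $\phi(x,y)$ below by $\tfrac{1}{\mu}\|x-y\|^{2}$. Your side remark about the sharper constant $\tfrac{1}{\mu}$ is a valid observation but not needed for the stated result.
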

\begin{proof}
  By Lemma \ref{phiequ}, $\langle x-y, Jx-Jy \rangle = \frac{1}{2} \phi(x,y) + \frac{1}{2} \phi(y,x) \geq \frac{1}{2} \phi(x,y)$. Then, by Lemma \ref{phiinequ1}, there exists \(\mu \geq 1\) such that $\frac{1}{\mu}\| x-y\|^{2} \leq \phi(x, y)$.  Hence $\frac{1}{2\mu}\| x-y\| ^{2} \leq \langle x-y, Jx-Jy \rangle$.
\end{proof}

Now we want to show that if $X$ be a real smooth and $2$-uniformly convex Banach space, there exist \(\mu, L \geq 1\) such that $\frac{1}{2\mu}\| x-y\|^{2} \leq \langle x-y, Jx-Jy \rangle \leq L\| x-y\| ^{2}$, i.e., for a monotone operator, the property of being strong or weak monotone is also equivalent under both Definition \ref{our def} and Definition \ref{classical def}, then $X$ is isomorphic to a Hilbert space.
\begin{theorem}\label{main 1}
   Suppose that Banach space $X$ is 2-uniformly convex and smooth, and there exists $L>0$ such that
\[\langle x-y, Jx-Jy \rangle \leq L\|x-y\|^2\]
for all \(x, y \in X\). Then $X$ is isomorphic to a Hilbert space.
\end{theorem}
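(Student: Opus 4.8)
The plan is to show that the hypothesis forces $X$ to be $2$-uniformly smooth, and then to combine this with the assumed $2$-uniform convexity and invoke the classical fact that a Banach space which is simultaneously $2$-uniformly convex and $2$-uniformly smooth is isomorphic to a Hilbert space. The first step is a short computation based on Lemma \ref{phiequ}; the genuine content lies in the second step.

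\textbf{Step 1 (reduction to $2$-uniform smoothness).} By Lemma \ref{phiequ}(ii) we have $\phi(x,y)+\phi(y,x)=2\langle x-y,Jx-Jy\rangle$, so the hypothesis gives $\phi(x,y)+\phi(y,x)\le 2L\|x-y\|^2$ for all $x,y\in X$; since $\phi\ge 0$ this yields the one-sided bound $\phi(x,y)\le 2L\|x-y\|^2$. Writing $x=z+h$, $y=z$ and using $\langle z+h,Jz\rangle=\|z\|^2+\langle h,Jz\rangle$, this becomes the Xu-type smoothness inequality
\[
\|z+h\|^2\le\|z\|^2+2\langle h,Jz\rangle+2L\|h\|^2,\qquad\forall\,z,h\in X.
\]
Fixing $z,y\in S_X$ and $\tau>0$, applying this with $h=\pm\tau y$ and adding cancels the terms $\pm 2\tau\langle y,Jz\rangle$, giving $\|z+\tau y\|^2+\|z-\tau y\|^2\le 2+4L\tau^2$. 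Hence, by convexity of $t\mapsto t^2$,
\[
\Big(\frac{\|z+\tau y\|+\|z-\tau y\|}{2}\Big)^{2}\le\frac{\|z+\tau y\|^2+\|z-\tau y\|^2}{2}\le 1+2L\tau^2 ,
\]
so $\tfrac12(\|z+\tau y\|+\|z-\tau y\|)\le\sqrt{1+2L\tau^2}\le 1+L\tau^2$. Taking the supremum over $z,y\in S_X$ gives $\rho_X(\tau)\le L\tau^2$ for all $\tau>0$, that is, $X$ is $2$-uniformly smooth.

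\textbf{Step 2 (conclusion).} A $2$-uniformly smooth space has Rademacher type $2$, and a $2$-uniformly convex space has Rademacher cotype $2$; by Kwapie\'n's theorem a Banach space which has both type $2$ and cotype $2$ is isomorphic to a Hilbert space. (Alternatively one may quote directly the classical characterization of Hilbertizable spaces through simultaneous power-type-$2$ moduli of convexity and smoothness.) Since $X$ is $2$-uniformly convex by hypothesis and $2$-uniformly smooth by Step 1, it follows that $X$ is isomorphic to a Hilbert space.

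The computations in Step 1 are entirely routine; the essential obstacle is that the conclusion genuinely rests on the deep implication ``type $2$ and cotype $2$ $\Rightarrow$ isomorphic to a Hilbert space'' (Kwapie\'n), which cannot be circumvented by elementary geometric arguments. The only subtlety in the elementary part is the use of $\phi\ge 0$ to pass from the symmetric sum $\phi(x,y)+\phi(y,x)$ to the one-sided estimate on $\phi(x,y)$, together with the identity $\langle z+h,Jz\rangle=\|z\|^2+\langle h,Jz\rangle$ that rewrites the $\phi$-bound as the smoothness inequality above.
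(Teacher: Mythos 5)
Your proposal is correct, and its overall architecture is the same as the paper's: reduce the hypothesis to $2$-uniform smoothness, then combine with the assumed $2$-uniform convexity via ``$2$-uniformly smooth $\Rightarrow$ type $2$'' and ``$2$-uniformly convex $\Rightarrow$ cotype $2$'' (Theorem \ref{usuc type}) and conclude with Kwapie\'n's theorem (Theorem \ref{type H}). Where you differ is in how Step 1 is carried out. The paper first passes to the disjunctive estimate ``$\phi(x,y)\le L\|x-y\|^2$ or $\phi(y,x)\le L\|x-y\|^2$'' and routes the argument through Theorem \ref{main 2} and Lemma \ref{key lemma 1}, whose proof uses the G\^ateaux-derivative representation $2\langle x-y,Jy\rangle=\lim_{t\to 0}\frac{\|y+t(x-y)\|^2-\|y\|^2}{t}$ together with the monotone difference-quotient Lemma \ref{nonde} to reach the parallelogram-type inequality of Lemma \ref{2us}. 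You instead keep the clean one-sided bound $\phi(x,y)\le 2L\|x-y\|^2$ (valid for \emph{all} pairs, at the cost of doubling the constant), rewrite it purely algebraically as $\|z+h\|^2\le\|z\|^2+2\langle h,Jz\rangle+2L\|h\|^2$ using only the definition of $\phi$ and $\langle z,Jz\rangle=\|z\|^2$, and then symmetrize with $h=\pm\tau y$ to bound $\rho_X(\tau)\le L\tau^2$ directly from the definition of the modulus of smoothness. This sidesteps the case analysis, the limit argument, and Lemma \ref{nonde} entirely, so your Step 1 is somewhat more elementary and self-contained; the paper's formulation of Theorem \ref{main 2} is slightly more general in that it only needs the one-sided ``or'' hypothesis. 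Both arguments ultimately rest on the same deep type/cotype ingredients, which, as you note, cannot be avoided.
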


\begin{remark}
Let $X$ be a smooth and $2$-uniformly convex Banach space.
By combining Theorem \ref{main inequa1} and Theorem \ref{main 1}, we proceed to compare the two versions of \(\alpha\)-monotone operators in Banach spaces.

(i) Strong monotonicity of an operator under Definition \ref{our def} $\Rightarrow$ strong monotonicity under Definition \ref{classical def}, though the converse is false (for example $L_{p}[0,1]~(1<p<2)$ is smooth and 2-uniformly convex Banach space but not isomorphic to a Hilbert space.).

(ii) Weak monotonicity under Definition \ref{classical def} $\Rightarrow$ weak monotonicity under Definition \ref{our def}, yet the converse fails.

(iii) It should be pointed out that strong monotone operators under Definition \ref{our def} is ``dense'' in strong monotone operators under Definition \ref{classical def}, see the following Theorem \ref{dense2}.
\end{remark}

We claim that if we have the following theorem, we can show Theorem \ref{main 1} holds.
\begin{theorem}\label{main 2}
  Suppose that Banach space $X$ is 2-uniformly convex and smooth, and there exists $K>0$ such that
\[K\| x-y\|^{2} \geq \phi(x, y) ~ \mathrm{or}~K\| x-y\|^{2} \geq \phi(y, x)\]
for all \(x, y \in X\). Then $X$ is isomorphic to a Hilbert space.
\end{theorem}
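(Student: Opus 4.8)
The plan is to show that the displayed hypothesis forces $X$ to be $2$-uniformly smooth. Once this is established, $X$ is simultaneously $2$-uniformly convex (by assumption) and $2$-uniformly smooth, and the conclusion follows from the classical characterization of Hilbert spaces: a Banach space with these two properties has Rademacher type $2$ and cotype $2$, hence is isomorphic to a Hilbert space by Kwapie\'n's theorem (one may equivalently invoke Figiel's results on the moduli of convexity and smoothness). Since the two variables in the hypothesis play symmetric roles, after interchanging the names of the universally quantified variables we may assume throughout that $\phi(x,y)\le K\|x-y\|^{2}$ for all $x,y\in X$; the alternative $\phi(y,x)\le K\|x-y\|^{2}$ reduces to this one, so no generality is lost.

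First I would translate the $\phi$-bound into a generalized parallelogram inequality. Applying the hypothesis to the pair $(x+y,\,x)$ and to the pair $(x-y,\,x)$ gives $\phi(x+y,x)\le K\|y\|^{2}$ and $\phi(x-y,x)\le K\|y\|^{2}$. Expanding both via $\phi(u,v)=\|u\|^{2}-2\langle u,Jv\rangle+\|v\|^{2}$ and adding, the linear terms combine (using $\langle x,Jx\rangle=\|x\|^{2}$) to yield
\[
\|x+y\|^{2}+\|x-y\|^{2}-2\|x\|^{2}\;=\;\phi(x+y,x)+\phi(x-y,x)\;\le\;2K\|y\|^{2},
\]
that is, $\|x+y\|^{2}+\|x-y\|^{2}\le 2\|x\|^{2}+2K\|y\|^{2}$ for all $x,y\in X$. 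Next I would read off $2$-uniform smoothness from this inequality: restricting to $\|x\|=\|y\|=1$ and replacing $y$ by $\tau y$ gives $\|x+\tau y\|^{2}+\|x-\tau y\|^{2}\le 2+2K\tau^{2}$, and combining this with $(\|a\|+\|b\|)^{2}\le 2(\|a\|^{2}+\|b\|^{2})$ and $\sqrt{1+t}\le 1+\frac{t}{2}$ yields $\frac{1}{2}\big(\|x+\tau y\|+\|x-\tau y\|\big)-1\le \frac{K}{2}\tau^{2}$. Taking the supremum over unit vectors gives $\rho_{X}(\tau)\le \frac{K}{2}\tau^{2}$, so $X$ is $2$-uniformly smooth, and the conclusion follows as described above.

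I expect the only genuinely nontrivial ingredient to be the final step — the fact that a Banach space which is both $2$-uniformly convex and $2$-uniformly smooth is isomorphic to a Hilbert space — since it rests on the type/cotype theory (Kwapie\'n, Pisier) rather than on elementary manipulations; everything preceding it is direct algebra with the Bregman function $\phi$ and the definition of the modulus of smoothness. A secondary point worth recording carefully is the reduction handling the disjunction in the hypothesis, and the observation that the resulting estimate $\langle x-y,Jx-Jy\rangle=\frac{1}{2}\big(\phi(x,y)+\phi(y,x)\big)\le K\|x-y\|^{2}$ also makes Theorem~\ref{main 1} an immediate consequence.
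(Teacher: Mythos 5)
Your reduction of the disjunction is the one step that does not hold up, and it matters. In Theorem \ref{main 2} — and in the only place it is used, namely the deduction of Theorem \ref{main 1}, whose proof concludes, for each fixed pair, ``$\phi(x,y)\le L\|x-y\|^2$ or $\phi(y,x)\le L\|x-y\|^2$'' — the ``or'' is pointwise: for every pair $(x,y)$ at least one of the two inequalities holds, and which one may depend on the pair. Renaming the universally quantified variables only disposes of the global disjunction (first inequality for all pairs, or second inequality for all pairs); it does not allow you to assume $\phi(x,y)\le K\|x-y\|^2$ for all $x,y$. This breaks your next step concretely: applying the hypothesis to the pair $(x+y,x)$ may only return the other branch, $\phi(x,x+y)\le K\|y\|^2$. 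Expanding, $\phi(x,x+y)=\|x\|^2-\|x+y\|^2+2\langle y,J(x+y)\rangle$, so this branch yields a \emph{lower} bound on $\|x+y\|^2$ (in terms of norms it gives only $(\|x+y\|-\|x\|)^2\le K\|y\|^2$, no stronger than the triangle inequality), not the upper bound you need, and the summation with the pair $(x-y,x)$ then fails to produce $\|x+y\|^2+\|x-y\|^2\le 2\|x\|^2+2K\|y\|^2$.

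The paper's Lemma \ref{key lemma 1} is built precisely to survive the pointwise ``or'': for each fixed pair, from either branch it derives the \emph{same} inequality
\begin{equation*}
\Bigl\|\tfrac{x+y}{2}\Bigr\|^{2}\;\ge\;\frac{\|x\|^{2}+\|y\|^{2}}{2}-\frac{K}{2}\|x-y\|^{2},
\end{equation*}
which is symmetric under swapping $x$ and $y$; it does so by writing $\phi(x,y)=\|x\|^2-2\langle x-y,Jy\rangle-\|y\|^2$ and using the monotonicity of difference quotients of the convex map $t\mapsto\|y+t(x-y)\|^2$ (equivalently, the subgradient inequality $\|y+h\|^2\ge\|y\|^2+2\langle h,Jy\rangle$ with $h=\tfrac{x-y}{2}$), after which Lemma \ref{2us} gives $2$-uniform smoothness. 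If you repair your argument along these lines, the remainder of your proposal coincides with the paper's route: $2$-uniform smoothness together with the assumed $2$-uniform convexity give type $2$ and cotype $2$ (Theorem \ref{usuc type}), and Kwapie\'n's theorem (Theorem \ref{type H}) yields the isomorphism with a Hilbert space. Your direct parallelogram computation is a perfectly good, indeed shorter, derivation under the single-branch hypothesis, and, as you note, it does suffice for Theorem \ref{main 1}, since there both $\phi(x,y)$ and $\phi(y,x)$ are bounded by $2L\|x-y\|^2$; but as written it does not prove Theorem \ref{main 2} in the form the paper states and uses it.
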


Now we prove Theorem \ref{main 1} by using Theorem \ref{main 2}.

\begin{proof}[proof of theorem \ref{main 1}]
  Since $\phi(x,y) +  \phi(y,x) = 2\langle x-y, Jx-Jy \rangle$, if $\langle x-y, Jx-Jy \rangle \leq L\|x-y\|^2$, then we have $\phi(x,y) +  \phi(y,x) \leq 2L\|x-y\|^2$. Therefore, $\phi(x,y) \leq L\|x-y\|^2$ or $\phi(y,x) \leq L\|x-y\|^2$. By Theorem \ref{main 2}, we get that $X$ is isomorphic to a Hilbert space.
\end{proof}

Before proving Theorem \ref{main 2}, we first establish some lemmas.
\begin{lemma}\label{2us}\cite{THK}
   The space $X$ 2-uniformly smooth if and only if there exists \(K \geq 1\) such that
\begin{align}\label{2-us}
  \frac{\| x+y\| ^{2}+\| x-y\| ^{2}}{2} \leq \| x\|^{2}+\left\| K y\right\|^{2}
\end{align}
for all \(x, y \in X\).
 \end{lemma}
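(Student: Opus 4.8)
The plan is to prove both implications of the stated equivalence directly from the definition of the modulus of smoothness $\rho_X$ given in the preliminaries, without invoking the Lindenstrauss duality between the moduli of convexity and smoothness. Throughout I would write $a=\|x+y\|$, $b=\|x-y\|$, $s=\|x\|$, $t=\|y\|$, and use the elementary identity $\frac{a^2+b^2}{2}=\left(\frac{a+b}{2}\right)^2+\left(\frac{a-b}{2}\right)^2$ together with the reverse triangle inequality $\big|\,\|x+y\|-\|x-y\|\,\big|\le\|2y\|=2t$, which gives $\left(\frac{a-b}{2}\right)^2\le t^2$. These two facts are the bridge between the \emph{linear} quantity controlled by $\rho_X$ and the \emph{quadratic} quantity (\ref{2-us}).

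For the easy direction (inequality (\ref{2-us}) implies 2-uniform smoothness), I would take $\|x\|=\|y\|=1$ and replace $y$ by $\tau y$ in (\ref{2-us}), obtaining $\frac{\|x+\tau y\|^2+\|x-\tau y\|^2}{2}\le 1+K^2\tau^2$. The arithmetic--quadratic mean inequality yields $\left(\frac{\|x+\tau y\|+\|x-\tau y\|}{2}\right)^2\le 1+K^2\tau^2$, and since $\sqrt{1+u}\le 1+\tfrac12 u$ one gets $\frac{\|x+\tau y\|+\|x-\tau y\|}{2}-1\le\frac{K^2}{2}\tau^2$. Taking the supremum over unit $x,y$ shows $\rho_X(\tau)\le\frac{K^2}{2}\tau^2$, so $X$ is 2-uniformly smooth.

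For the converse, assume $\rho_X(\tau)\le C\tau^2$. Both sides of (\ref{2-us}) are positively homogeneous of degree two jointly in $(x,y)$, so it suffices to treat $t=\|y\|=1$ and arbitrary $x$, the cases $x=0$ or $y=0$ being immediate. First I would homogenize the modulus bound: for unit $u,v$ one has $\frac{\|u+\tau v\|+\|u-\tau v\|}{2}\le 1+C\tau^2$, and rescaling via $u=x/s$, $v=y/t$, $\tau=t/s$ gives $\frac{a+b}{2}\le s+C\frac{t^2}{s}$ for every $x\neq 0$. Squaring this and adding the bound $\left(\frac{a-b}{2}\right)^2\le t^2$ produces the key estimate $\frac{a^2+b^2}{2}\le s^2+(2C+1)t^2+C^2\frac{t^4}{s^2}$.

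The main obstacle is precisely the last term $C^2 t^4/s^2$, which blows up as $s=\|x\|\to 0$ and therefore cannot be absorbed into $s^2+K^2t^2$ with a uniform constant. I would resolve this by a case split (normalizing $t=1$): when $s\ge 1$ the offending term obeys $C^2/s^2\le C^2$, so that $\frac{a^2+b^2}{2}\le s^2+(2C+1+C^2)=s^2+(C+1)^2$; when $s<1$ I discard the refined estimate entirely and use the crude triangle bound $a,b\le s+1\le 2$, whence $\frac{a^2+b^2}{2}\le 4\le s^2+4$. Choosing $K=\max\{C+1,\,2\}\ge 1$ then validates (\ref{2-us}) in every case, and combined with the first direction this establishes the equivalence.
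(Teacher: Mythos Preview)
Your argument is correct. In the paper this lemma is simply quoted from \cite{THK} with no proof given, so there is nothing to compare on the level of technique; you have supplied a self-contained proof where the paper offers only a citation. Your decomposition $\frac{a^2+b^2}{2}=\bigl(\frac{a+b}{2}\bigr)^2+\bigl(\frac{a-b}{2}\bigr)^2$ together with the reverse triangle bound $\bigl(\frac{a-b}{2}\bigr)^2\le t^2$ is the right bridge from the linear control $\rho_X(\tau)\le C\tau^2$ to the quadratic inequality, and the case split $s\ge 1$ versus $s<1$ (after normalizing $t=1$) correctly disposes of the nuisance term $C^2 t^4/s^2$; both directions are sound.
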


\begin{remark}
  Let $u=x-y, v=x+y$, then (\ref{2-us}) is equivalent to
  \begin{align}
    \left\| \frac{u+v}{2}\right\|^{2} \geq \frac{\| u\| ^{2}+\| v\| ^{2}}{2}-\frac{K^{2}}{4}\| u-v\| ^{2}.
  \end{align}
\end{remark}

Next, we establish that in 2-uniformly smooth Banach spaces, the inequality in Theorem \ref{main 2} holds.
\begin{lemma}
  Suppose that Banach space $X$ is 2-uniformly smooth. Then there exists $L\geq1$ such that
\[L\| x-y\|^{2} \geq \phi(x, y) \]
for all \(x, y \in X\).
\end{lemma}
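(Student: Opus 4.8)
The real content here is packaged in Lemma \ref{2us}: once the upper ``parallelogram-type'' bound for the squared norm is available, the estimate for $\phi$ follows by a short argument using the convexity of $t\mapsto\|y+t(x-y)\|^{2}$. So the plan is to recast $\phi(x,y)$ as a ``second difference'' of this convex function and then feed in Lemma \ref{2us}.

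First I would pass to a one-dimensional reformulation. Fix $x,y\in X$, put $u=x-y$, and define $g:\R\to\R$ by $g(t)=\|y+tu\|^{2}$. Since a $2$-uniformly smooth space is smooth, the norm is G\^ateaux differentiable off the origin and $\|\cdot\|^{2}$ is G\^ateaux differentiable everywhere with derivative $2J$; hence $g$ is convex and differentiable with $g'(t)=2\langle u,J(y+tu)\rangle$, so $g'$ is non-decreasing, and $g$ is in particular locally Lipschitz, hence absolutely continuous. Using $\langle y,Jy\rangle=\|y\|^{2}$ one checks the identity
\[\phi(x,y)=\|x\|^{2}-\|y\|^{2}-2\langle x-y,Jy\rangle=g(1)-g(0)-g'(0)=\int_{0}^{1}\bigl(g'(s)-g'(0)\bigr)\,ds .\]

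Next I would insert Lemma \ref{2us} into this identity. Applying the $2$-uniform smoothness inequality to the pair $(y,tu)$ gives a constant $K\ge 1$ with $g(t)+g(-t)\le 2g(0)+2K^{2}t^{2}\|u\|^{2}$ for every $t\in\R$. Because $g'$ is non-decreasing we have $g'(-s)\le g'(0)$ for $s\ge 0$, whence $g'(s)-g'(0)\le g'(s)-g'(-s)$ on $[0,1]$, and integrating,
\[\phi(x,y)=\int_{0}^{1}\bigl(g'(s)-g'(0)\bigr)\,ds\le\int_{0}^{1}\bigl(g'(s)-g'(-s)\bigr)\,ds=g(1)+g(-1)-2g(0)\le 2K^{2}\|u\|^{2}.\]
Thus the lemma holds with $L=2K^{2}\ge 1$. (Equivalently, the displayed chain states that $2$-uniform smoothness yields $\|y+u\|^{2}\le\|y\|^{2}+2\langle u,Jy\rangle+2K^{2}\|u\|^{2}$, which is exactly $\phi(x,y)\le 2K^{2}\|x-y\|^{2}$.)

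The argument is essentially routine once this reformulation is in place; the only point needing a little care is the differentiability bookkeeping for $g$ — that $g$ is convex with $g'(t)=2\langle u,J(y+tu)\rangle$ and that the fundamental theorem of calculus applies so that $g'$ is genuinely monotone and integrable — which I would settle by recalling that in a smooth Banach space $\|\cdot\|^{2}$ has G\^ateaux derivative $2J$ and that convex functions on $\R$ are absolutely continuous. A purely algebraic alternative would iterate Lemma \ref{2us} on the pair $(y+\tfrac{u}{2},\tfrac{u}{2})$, but that produces a geometric series and is less transparent, so I would keep the convexity/integration proof.
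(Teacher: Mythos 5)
Your proof is correct, and it reaches the conclusion by a genuinely different route than the paper, although both arguments hinge on the same key input, Lemma \ref{2us}. The paper works with the midpoint form of that inequality, iterates it by induction to control $\bigl\|(1-2^{-n})y+2^{-n}x\bigr\|^{2}$ at dyadic parameters, and then lets $n\to\infty$, using the smoothness of $X$ to identify the limit of the difference quotients with $2\langle x-y,Jy\rangle$; this yields $\phi(x,y)\le K^{2}\|x-y\|^{2}$. You instead apply Lemma \ref{2us} exactly once, to the pair $(y,\,x-y)$, and replace the induction-plus-limit machinery with one-variable convex calculus: writing $g(t)=\|y+t(x-y)\|^{2}$, the identity $\phi(x,y)=g(1)-g(0)-g'(0)$ together with monotonicity of $g'$ (or, even more simply, the chord inequality $g'(0)\ge g(0)-g(-1)$, which would let you skip the integration and the absolute-continuity bookkeeping entirely) bounds $\phi(x,y)$ by the second difference $g(1)+g(-1)-2g(0)\le 2K^{2}\|x-y\|^{2}$. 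What the paper's approach buys is the sharper constant $L=K^{2}$, since the dyadic limit exploits the inequality infinitesimally near $t=0$, whereas your symmetrization around $t=0$ costs a factor $2$; what your approach buys is brevity and transparency, avoiding both the induction and the limiting argument. Since the lemma only asserts the existence of some $L\ge 1$, the weaker constant is harmless, and your differentiability justifications (smoothness of a $2$-uniformly smooth space, $g$ convex with $g'(t)=2\langle x-y,J(y+t(x-y))\rangle$, fundamental theorem of calculus for convex functions on $\R$) are all sound.
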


\begin{proof}
  If $X$ is 2-uniformly smooth, then By Lemma \ref{2us},  there exists \(K \geq 1\) such that
\begin{align}\label{st2}
  \left\| \frac{u+v}{2}\right\|^{2} \geq \frac{\| u\| ^{2}+\| v\| ^{2}}{2}-\frac{K^{2}}{4}\| u-v\|^{2}
\end{align}
for all \(u, v \in X\) . Let \(x, y \in X\). By (\ref{st2}) and induction, we can easily show that
\begin{equation}
\begin{aligned}
& \left\| \left(1-\frac{1}{2^{n}}\right) y+\frac{1}{2^{n}} x\right\| ^{2} \\
& \geq \left(1-\frac{1}{2^{n}}\right)\| y\| ^{2}+\frac{1}{2^{n}}\| x\| ^{2}-K^{2}\left(1-\frac{1}{2^{n}}\right) \frac{1}{2^{n}}\| y-x\| ^{2}
\end{aligned}
\end{equation}
for all \(n \in \mathbb{N}\) . Hence we have
\begin{equation}\label{stnn}
  \begin{aligned} & 2^{n}\left(\left\| y+\frac{1}{2^{n}}(x-y)\right\| ^{2}-\| y\| ^{2}\right) \\ & \geq-\| y\| ^{2}+\| x\| ^{2}-K^{2}\left(1-\frac{1}{2^{n}}\right)\| y-x\| ^{2} \end{aligned}
\end{equation}
for all \(n \in \mathbb{N}\) . The smoothness of $X$ implies that
\begin{align}\label{stss}
  2\langle x-y, J y \rangle=\lim_{t \to 0} \frac{\| y+t(x-y)\| ^{2}-\| y\| ^{2}}{t} .
\end{align}
By (\ref{stnn}) and (\ref{stss}), we have
\[\begin{aligned} 2\langle x-y, J y \rangle & =\lim _{n \to \infty} 2^{n}\left(\left\| y+\frac{1}{2^{n}}(x-y)\right\| ^{2}-\| y\| ^{2}\right) \\ & \geq-\| y\| ^{2}+\| x\| ^{2}-K^{2}\| x-y\| ^{2} . \end{aligned}\]

Therefore, we obtain \( L\|x-y\|^{2} \geq \phi(x, y)\) as desired, where $L=K^2\geq1$.
\end{proof}

Next, we want to show that, if $X$ is smooth Banach space, and if there exists $L>0$ such that $L\| x-y\|^{2} \geq \phi(x, y) $ for all \(x, y \in X\), then $X$ is 2-uniformly smooth.

\begin{lemma}\label{nonde}\cite{HRB}
  Let $g: X\to \mathbb{R}$ be a convex function. For any $x_0, x\in X$, the function \begin{align}
    t \mapsto \frac{g(x_0 + tx) - g(x_0)}{t}
  \end{align}
  is nondecreasing for $t>0$.
\end{lemma}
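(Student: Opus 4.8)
The plan is to use the standard convex-combination trick underlying the ``three-slopes'' inequality for convex functions, so that no geometry of $X$ is needed at all. Fix $x_0, x \in X$ and take arbitrary $0 < s < t$; the goal is to show that the difference quotient at $s$ does not exceed the one at $t$. The key observation is that $x_0 + sx$ lies on the segment joining $x_0$ to $x_0 + tx$; precisely, $x_0 + sx = \lambda(x_0 + tx) + (1-\lambda)x_0$ with $\lambda = s/t \in (0,1)$.

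Applying convexity of $g$ to this representation gives $g(x_0 + sx) \le \tfrac{s}{t}\,g(x_0 + tx) + \bigl(1 - \tfrac{s}{t}\bigr)g(x_0)$, hence $g(x_0 + sx) - g(x_0) \le \tfrac{s}{t}\bigl(g(x_0 + tx) - g(x_0)\bigr)$. Dividing through by $s > 0$ yields
\[
\frac{g(x_0 + sx) - g(x_0)}{s} \;\le\; \frac{g(x_0 + tx) - g(x_0)}{t},
\]
which is exactly the claim that $t \mapsto \dfrac{g(x_0 + tx) - g(x_0)}{t}$ is nondecreasing on $(0,\infty)$.

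There is essentially no obstacle here: the argument reduces to a single invocation of the definition of convexity together with the elementary identity expressing $x_0 + sx$ as a convex combination of $x_0$ and $x_0 + tx$. No smoothness, reflexivity, or topological hypotheses on $X$ are used, and $g$ need not be differentiable; convexity and $s,t>0$ suffice. (As an alternative one could cite the classical fact that the one-sided directional derivative $\lim_{t\downarrow 0}\tfrac{g(x_0+tx)-g(x_0)}{t}$ of a convex function exists in $[-\infty,\infty)$ as the infimum of these quotients, but the direct two-point estimate above is cleaner and entirely self-contained.)
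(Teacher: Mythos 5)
Your proof is correct: it is the standard convexity (three-slope) argument, which is exactly the argument behind the cited result in Holmes' book; the paper itself states this lemma without proof, citing \cite{HRB}. Nothing further is needed, since only convexity of $g$ and $0<s<t$ are used, as you note.
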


\begin{lemma}\label{key lemma 1}
  Suppose that Banach space $X$ is smooth, and there exists $K>0$ such that
\[K\| x-y\|^{2} \geq \phi(x, y)~ \mathrm{or}~K\| x-y\|^{2} \geq \phi(y, x)\]
for all \(x, y \in X\). Then $X$ is 2-uniformly smooth.
\end{lemma}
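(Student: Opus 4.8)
The plan is to first reduce the conclusion to a clean intermediate estimate and then attack that estimate by a one-dimensional argument. The intermediate goal is: \emph{for all $x,y\in X$, $\langle x-y,Jx-Jy\rangle\le C\|x-y\|^2$ for some $C>0$}. Granting this, $2$-uniform smoothness follows by bookkeeping: by Lemma \ref{phiequ}(ii) and $\phi\ge 0$ we get $\phi(x,y)\le\phi(x,y)+\phi(y,x)=2\langle x-y,Jx-Jy\rangle\le 2C\|x-y\|^2$, and then, expanding $\phi(a+b,a)$ and $\phi(a-b,a)$ via $\phi(u,v)=\|u\|^2-2\langle u,Jv\rangle+\|v\|^2$ and adding, one obtains $\|a+b\|^2+\|a-b\|^2-2\|a\|^2=\phi(a+b,a)+\phi(a-b,a)\le 4C\|b\|^2$, i.e.\ $\tfrac12(\|a+b\|^2+\|a-b\|^2)\le\|a\|^2+2C\|b\|^2$ for all $a,b$; Lemma \ref{2us} then gives $2$-uniform smoothness (with constant $\max\{\sqrt{2C},1\}$). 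So the entire difficulty is to promote the \emph{one-sided, pair-by-pair} hypothesis (only $\phi(x,y)$ \emph{or} $\phi(y,x)$ is controlled) to a uniform two-sided bound.

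To do this I would restrict attention to a line. Fix $a,h\in X$ with $h\neq 0$ and set $\psi(t)=\|a+th\|^2$; since $\|\cdot\|^2$ is convex and $X$ is smooth, $\psi$ is convex and $C^1$ with $\psi'(t)=2\langle h,J(a+th)\rangle$ continuous and nondecreasing (Lemma \ref{nonde} and smoothness). Using the definition of $\phi$ together with $\langle a+s_1h,J(a+s_2h)\rangle=\|a+s_2h\|^2+(s_1-s_2)\langle h,J(a+s_2h)\rangle$, one checks the identity $\phi(a+s_1h,\,a+s_2h)=\psi(s_1)-\psi(s_2)-(s_1-s_2)\psi'(s_2)$, the Bregman distance of $\psi$; also $\|(s_1-s_2)h\|^2=(s_1-s_2)^2\|h\|^2$. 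Since, as $a,h$ vary, the pairs $(a+s_1h,a+s_2h)$ exhaust all pairs of $X$, the hypothesis becomes, on each such line, the following statement applied to $g=\psi'$ with $K_0=K\|h\|^2$.

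\emph{Claim.} Let $g:\R\to\R$ be nondecreasing and continuous, and suppose that for all $s<t$ one has $\int_s^t(g(u)-g(s))\,du\le K_0(t-s)^2$ \emph{or} $\int_s^t(g(t)-g(u))\,du\le K_0(t-s)^2$. Then $g(t)-g(s)\le 4K_0(t-s)$ for all $s<t$. The proof I have in mind: with $m=\tfrac{s+t}{2}$, monotonicity gives $\int_s^t(g(u)-g(s))\,du\ge\tfrac{t-s}{2}(g(m)-g(s))$ and $\int_s^t(g(t)-g(u))\,du\ge\tfrac{t-s}{2}(g(t)-g(m))$, so the dichotomy forces $\min\{g(m)-g(s),\,g(t)-g(m)\}\le 2K_0(t-s)$, whence $g(t)-g(s)\le\max\{g(m)-g(s),\,g(t)-g(m)\}+2K_0(t-s)$. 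Iterating — at each step passing to the half of the current interval on which $g$ has the \emph{larger} increment — yields after $n$ steps $g(t)-g(s)\le\big(g(t_n)-g(s_n)\big)+2K_0(t-s)\sum_{k<n}2^{-k}<\big(g(t_n)-g(s_n)\big)+4K_0(t-s)$, where $[s_n,t_n]$ is a nested sequence of intervals of length $(t-s)2^{-n}$; these shrink to a single point, so $g(t_n)-g(s_n)\to 0$ by continuity, giving the claim. Applying this to $g=\psi'$ shows $\psi'$ is $4K\|h\|^2$-Lipschitz; taking $s=0,\ t=1$ and writing $x=a+h,\ y=a$ gives $\langle x-y,Jx-Jy\rangle=\tfrac12(\psi'(1)-\psi'(0))\le 2K\|x-y\|^2$ for all $x,y$, which is the intermediate estimate (with $C=2K$).

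The main obstacle is precisely the ``or'': for a single pair it gives no information about the other quantity, and crude scaling arguments fail because $\phi(x,y)$ and $\phi(y,x)$ are genuinely comparable at large scales. The dyadic-descent step in the Claim is what circumvents this, and it leans on two structural facts that I expect to be essential: convexity of $\|\cdot\|^2$ (so $\psi'$ is monotone and the one-sided integral bounds descend to increments over halves), and smoothness of $X$ (so $\psi'$ is continuous, which is exactly what forces the leftover increment $g(t_n)-g(s_n)$ over the shrinking intervals to vanish — without continuity a single ``corner'' in $\psi'$ would survive). Everything else — the line reduction, the Bregman-distance identity, and the final return to the $\tfrac12(\|a+b\|^2+\|a-b\|^2)$ inequality via Lemma \ref{2us} — is routine.
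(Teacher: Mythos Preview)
Your proof is correct, but it takes a substantially longer road than the paper's. The paper observes that the target midpoint inequality $\left\|\tfrac{x+y}{2}\right\|^2\ge\tfrac{\|x\|^2+\|y\|^2}{2}-\tfrac{K}{2}\|x-y\|^2$ is \emph{symmetric} in $x,y$, so the pairwise ``or'' is no obstacle at all: for a fixed pair, if $\phi(x,y)\le K\|x-y\|^2$ then, rewriting $\phi(x,y)=\|x\|^2-\|y\|^2-2\langle x-y,Jy\rangle$ and using Lemma~\ref{nonde} once (comparing the $t\to 0$ difference quotient with its value at $t=\tfrac12$), one gets the midpoint inequality directly; if instead $\phi(y,x)\le K\|x-y\|^2$, swapping $x\leftrightarrow y$ yields the very same inequality. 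That is the entire proof.

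Your approach --- restrict to a line, interpret the hypothesis as a dichotomy for the one-dimensional Bregman remainder of $\psi(t)=\|a+th\|^2$, and run a dyadic descent to upgrade the pairwise ``or'' to the uniform bound $\langle x-y,Jx-Jy\rangle\le 2K\|x-y\|^2$ --- is sound and the Claim is proved correctly (monotonicity gives the half-interval lower bounds, and continuity of $\psi'$, which follows from smoothness plus convexity on $\R$, kills the residual increment). What you gain is the intermediate two-sided estimate itself, which is of independent interest (it is precisely the hypothesis of Theorem~\ref{main 1}); what the paper's argument buys is brevity and a slightly better constant, by going straight to the symmetric midpoint form and never needing to symmetrize the hypothesis. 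In short: you correctly solved a harder problem than necessary --- the ``main obstacle'' you identify dissolves once one notices that the conclusion one is aiming for is already symmetric.
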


\begin{proof}

Let \(x, y \in X\). The smoothness of $X$ implies that
\begin{align}\label{stsss}
  2\langle x-y, Jy\rangle=\lim_{t \to 0} \frac{\| y+t(x-y)\| ^{2}-\| y\| ^{2}}{t} .
\end{align}
and if
\begin{align}\label{sinequ}
K\| x-y\|^{2} \geq \phi(x, y),
\end{align}

By (\ref{stsss}) and (\ref{sinequ}), and noting that $\phi(x, y) = \|x\|^2 - 2\langle x-y, Jy\rangle - \|y\|^2$, we have
\begin{equation}
\begin{aligned} \lim _{n \to \infty} 2^{n}\left(\left\| y+\frac{1}{2^{n}}(x-y)\right\| ^{2}-\| y\| ^{2}\right) & = 2\langle x-y, J y\rangle \\ & \geq-\| y\| ^{2}+\| x\| ^{2}-K\| x-y\| ^{2} .
\end{aligned}
\end{equation}
Then by Lemma \ref{nonde} we have
\begin{align}
  2\left(\left\|\frac{x+y}{2} \right\|^2 - \| y\|^{2}\right) \geq-\| y\| ^{2}+\| x\| ^{2}-K\| x-y\| ^{2} .
\end{align}
Hence we have
\begin{align}\label{st3}
  \left\| \frac{x+y}{2}\right\|^{2} \geq \frac{\| x\| ^{2}+\| y\| ^{2}}{2}-\frac{K}{2}\| x-y\|^{2} \geq \frac{\| x\| ^{2}+\| y\| ^{2}}{2}-\frac{K^2}{4}\| x-y\|^{2},
\end{align}
the last inequality of (\ref{st3}) holds since we can always suppose $K\geq 2$.
Therefore, by Lemma \ref{2us}, we conclude that $X$ is 2-uniformly smooth.

The proof for the case $K\| x-y\|^{2} \geq \phi(y, x)$ follows similar arguments.
\end{proof}

Finally, by combining Lemma \ref{key lemma 1} with the following two theorems in Banach spaces theory, we can establish Theorem \ref{main 2}.

\begin{theorem}\label{usuc type}\cite{BL}
  Let $X$ be a Banach space,

  (i) if $X$ is $p$-uniformly smooth, then $X$ has type $p$.

  (ii) if $X$ is $q$-uniformly convex, then $X$ has cotype $q$.
\end{theorem}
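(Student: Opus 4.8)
The plan is to derive both parts from one ``Rademacher peeling'' estimate, after first rephrasing the modulus hypotheses in power-parallelogram form. Fix a sequence $(\eps_i)_{i\ge1}$ of independent Rademacher variables, and for $x_1,\dots,x_n\in X$ set $S_k=\sum_{i=1}^k\eps_i x_i$, with $S_0=0$. The preliminary step is to record the classical reformulations: $X$ is $p$-uniformly smooth if and only if there is $K\ge1$ with
\[
\frac{\|a+b\|^{p}+\|a-b\|^{p}}{2}\le\|a\|^{p}+K^{p}\|b\|^{p}\qquad(a,b\in X),
\]
and $X$ is $q$-uniformly convex if and only if there is $\mu\ge1$ with
\[
\frac{\|a+b\|^{q}+\|a-b\|^{q}}{2}\ge\|a\|^{q}+\mu^{-q}\|b\|^{q}\qquad(a,b\in X).
\]
For exponent $2$ these are exactly Lemmas \ref{2us} and \ref{2uc}; for general $p\in(1,2]$ and $q\in[2,\infty)$ they are the Figiel / Lindenstrauss--Tzafriri characterisations, obtained from $\rho_X(\tau)\le K\tau^{p}$, respectively $\delta_X(t)\ge c\,t^{q}$, by a homogenisation-and-iteration argument in the spirit of the induction used in the lemma immediately preceding Lemma \ref{nonde}.

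For (i), I would fix $x_1,\dots,x_n$, condition on $\eps_1,\dots,\eps_{k-1}$, and average over $\eps_k$, applying the smoothness inequality with $a=S_{k-1}$ and $b=x_k$:
\[
\mathbb{E}_{\eps_k}\|S_k\|^{p}=\frac{\|S_{k-1}+x_k\|^{p}+\|S_{k-1}-x_k\|^{p}}{2}\le\|S_{k-1}\|^{p}+K^{p}\|x_k\|^{p}.
\]
Taking full expectations and iterating from $k=n$ down to $k=1$ (using $S_0=0$) yields $\mathbb{E}\|S_n\|^{p}\le K^{p}\sum_{i=1}^{n}\|x_i\|^{p}$, hence $(\mathbb{E}\|S_n\|^{p})^{1/p}\le K\big(\sum_i\|x_i\|^{p}\big)^{1/p}$. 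Kahane's inequality then gives a constant $\kappa_p$ with $(\mathbb{E}\|S_n\|^{2})^{1/2}\le\kappa_p(\mathbb{E}\|S_n\|^{p})^{1/p}$, and combining the two inequalities produces the type-$p$ estimate with constant $\kappa_p K$.

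Part (ii) is the mirror image, with the convexity inequality in place of the smoothness one: conditioning and averaging over $\eps_k$ gives $\mathbb{E}_{\eps_k}\|S_k\|^{q}\ge\|S_{k-1}\|^{q}+\mu^{-q}\|x_k\|^{q}$; iterating gives $\mathbb{E}\|S_n\|^{q}\ge\mu^{-q}\sum_i\|x_i\|^{q}$, that is, $\big(\sum_i\|x_i\|^{q}\big)^{1/q}\le\mu(\mathbb{E}\|S_n\|^{q})^{1/q}$; and Kahane's inequality, now in the form $(\mathbb{E}\|S_n\|^{q})^{1/q}\le\kappa_q(\mathbb{E}\|S_n\|^{2})^{1/2}$, yields the cotype-$q$ estimate with constant $\mu\kappa_q$. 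As an alternative one can deduce (ii) from (i) by duality: if $X$ is $q$-uniformly convex then $X^{*}$ is $q'$-uniformly smooth ($1/q+1/q'=1$), hence has type $q'$ by (i), so $X^{**}$ has cotype $q$, and cotype passes to the subspace $X$; but this merely trades the Figiel characterisation for the uniform-smoothness/uniform-convexity duality theorem, at comparable cost.

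The routine parts are the peeling induction and the Kahane comparison of $L^{2}$ with $L^{p}$ (resp. $L^{q}$) norms of Rademacher sums. The one genuinely substantial step — and the expected main obstacle — is the preliminary one: upgrading the modulus estimate, which only constrains $a,b$ on the unit sphere, to the homogeneous power-parallelogram inequality valid for all $a,b\in X$. Once that is in hand, everything downstream is bookkeeping.
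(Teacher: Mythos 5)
The paper does not actually prove this statement: Theorem \ref{usuc type} is quoted as a black box from Benyamini--Lindenstrauss \cite{BL}, so there is no internal argument to compare yours against. Your proposal is the standard proof of this classical fact and it is sound: once the modulus hypotheses are upgraded to the homogeneous two-point power inequalities valid for all $a,b\in X$, the conditioning-and-peeling step over the Rademacher signs and the Kahane comparison of moments go through exactly as you describe (for part (i) you do not even need Kahane if type $p$ is formulated with the $L^{p}$-moment of the Rademacher sum), and your duality alternative for (ii) is used in the correct direction (type of $X^{*}$ yields cotype of $X^{**}$, and cotype passes to the subspace $X$). You also correctly isolate the one substantial ingredient: the equivalence between $\rho_X(\tau)\le K\tau^{p}$ (resp.\ $\delta_X(\varepsilon)\ge c\,\varepsilon^{q}$) and the power-parallelogram inequality for arbitrary vectors. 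For exponent $2$ this is precisely Lemmas \ref{2uc} and \ref{2us} of the paper (cited to \cite{BB,BK,THK}); for general $p\in(1,2]$ and $q\in[2,\infty)$ it is the Figiel/Pisier/Ball--Carlen--Lieb-type characterization, which is a genuine theorem rather than a mere homogenisation exercise, so in a self-contained write-up you should either cite it or prove it. With that granted, your argument is complete and in fact more informative than the paper's bare citation, since it produces explicit type and cotype constants ($\kappa_p K$ and $\mu\kappa_q$).
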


\begin{theorem}\label{type H}\cite{AKNJ}
  A Banach space $X$ is of type $2$ and cotype $2$ if and only if $X$ is isomorphic to a Hilbert space.
\end{theorem}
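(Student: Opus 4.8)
The plan is to prove both implications, handling the forward direction (Hilbert $\Rightarrow$ type~$2$ and cotype~$2$) by a direct computation and the converse (the deep direction, due to Kwapień) by the Gaussian-averaging method. For the easy direction I would fix a linear isomorphism $T:X\to H$ onto a Hilbert space with $\|T\|\,\|T^{-1}\|=d$. For any finite family $x_1,\dots,x_n\in X$ and independent Rademacher variables $(\varepsilon_i)$, orthogonality of the signs in $H$ gives the exact identity $\mathbb{E}\big\|\sum_i\varepsilon_i Tx_i\big\|_H^2=\sum_i\|Tx_i\|_H^2$; transporting the two-sided estimate $d^{-1}\|x\|\le \|Tx\|_H\le \|T\|\,\|x\|$ through this identity produces constants for which $X$ satisfies both the type-$2$ and the cotype-$2$ inequalities. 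Hence every space isomorphic to a Hilbert space has type $2$ and cotype $2$.

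For the converse I would first reduce to a uniform finite-dimensional statement: it suffices to show that every finite-dimensional subspace $E\subseteq X$ with $\dim E=n$ satisfies $d(E,\ell_2^{\,n})\le T_2C_2$ with a bound independent of $n$, where $T_2,C_2$ are the type-$2$ and cotype-$2$ constants and $d(\cdot,\cdot)$ is the Banach–Mazur distance. Given such a uniform bound, I would globalize by taking an ultrafilter limit over the directed family of finite-dimensional subspaces of the associated inner products $\langle\cdot,\cdot\rangle_E$, normalized so that $\|x\|^2\le\langle x,x\rangle_E\le(T_2C_2)^2\|x\|^2$ on $E$; the resulting bilinear form is well defined (the values $\langle x,y\rangle_E$ are bounded by $(T_2C_2)^2\|x\|\,\|y\|$ via Cauchy–Schwarz), symmetric, positive, and equivalent to $\|\cdot\|^2$, so it endows $X$ with an equivalent Hilbertian norm. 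The next preparatory step is to replace Rademacher by Gaussian averages: since a cotype-$2$ space has finite cotype, Kahane's inequality and the standard two-sided comparison between Rademacher and Gaussian sums show that the type-$2$ and cotype-$2$ inequalities remain valid (with comparable constants) for independent standard Gaussians $(g_i)$. Writing $\ell(u)^2=\mathbb{E}\big\|\sum_i g_i\,ue_i\big\|^2$ and $\|u\|_{\mathrm{HS}}^2=\sum_i\|ue_i\|^2$ for an operator $u:\ell_2^{\,n}\to E$, this yields the key two-sided estimate $C_2^{-1}\|u\|_{\mathrm{HS}}\le \ell(u)\le T_2\|u\|_{\mathrm{HS}}$.

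The heart of the argument, and the step I expect to be the main obstacle, is to convert the equivalence $\ell(u)\asymp\|u\|_{\mathrm{HS}}$ into a \emph{dimension-free} factorization of $\mathrm{id}_E$ through $\ell_2^{\,n}$. Here I would exploit the rotational invariance of Gaussian vectors: if $O$ is orthogonal, then $\sum_i g_i\sum_j O_{ij}x_j$ has the same law as $\sum_j g_j x_j$, so the Gaussian quadratic average is invariant under orthogonal reparametrizations of the vectors, and consequently $\ell$ is an orthogonally invariant (Hilbertian) norm on operators into $E$. Combining this invariance with the trace duality between $\ell$ and its dual $\ell^{\ast}$ (so that $\ell^{\ast}\asymp\|\cdot\|_{\mathrm{HS}}$ as well, the Hilbert–Schmidt norm being self-dual) gives a factorization $\mathrm{id}_E=\beta\alpha$ with $\alpha:E\to\ell_2^{\,n}$, $\beta:\ell_2^{\,n}\to E$ and $\|\alpha\|\,\|\beta\|\le T_2C_2$, whence $d(E,\ell_2^{\,n})\le T_2C_2$. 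The genuine difficulty is precisely this dimension-independence: no elementary, norm-only finite argument suffices, and one must use the special probabilistic structure of Gaussian averages—their rotational invariance and their comparability with Rademacher averages under finite cotype—which is exactly the content of Kwapień's theorem. Once the uniform distance bound is in hand, the ultrafilter globalization described above completes the proof.
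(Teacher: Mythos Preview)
The paper does not prove this statement at all: Theorem~\ref{type H} is quoted from \cite{AKNJ} (Albiac--Kalton) and is used as a black box in the proof of Theorem~\ref{main 2}. There is therefore no ``paper's own proof'' to compare against; the authors simply invoke Kwapie\'n's theorem as a known result.

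Your outline is the standard route to Kwapie\'n's theorem and is essentially sound. The easy direction is fine. For the converse, the reduction to a uniform Banach--Mazur bound on finite-dimensional subspaces, the passage from Rademacher to Gaussian averages (legitimate under finite cotype), and the two-sided estimate $C_2^{-1}\|u\|_{\mathrm{HS}}\le\ell(u)\le T_2\|u\|_{\mathrm{HS}}$ are all correct. The one place where your sketch is genuinely thin is the sentence ``combining this invariance with the trace duality between $\ell$ and its dual $\ell^{*}$ \dots\ gives a factorization $\mathrm{id}_E=\beta\alpha$ with $\|\alpha\|\,\|\beta\|\le T_2C_2$'': to make this rigorous you need an extremal position (Lewis's lemma, or equivalently the $\ell$-ellipsoid/John-type variational argument) producing an isomorphism $u:\ell_2^{\,n}\to E$ with $\ell(u)\,\ell^{*}(u^{-1})=n$, and then the self-duality of $\|\cdot\|_{\mathrm{HS}}$ converts the two-sided $\ell\asymp\|\cdot\|_{\mathrm{HS}}$ estimate into the desired operator-norm bound. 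You flag this step as the main obstacle, which is accurate, but as written the factorization does not yet follow from the ingredients you have assembled; inserting Lewis's lemma (or the equivalent minimization of $\ell(u)$ over $\det u=1$) closes the gap. The ultrafilter globalization is a valid way to finish, though a slightly simpler alternative is to observe directly that a space whose finite-dimensional subspaces are uniformly Hilbertian is itself isomorphic to a Hilbert space via a compactness/limiting inner-product argument.
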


Now we prove Theorem \ref{main 2}:
\begin{proof}[Proof of theorem \ref{main 2}]
  By Lemma \ref{key lemma 1}, if a Banach space $X$ is smooth, and there exists a constant $K > 0$ such that for all $x, y \in X$, either
\begin{equation}
K \| x - y \|^2 \geq \phi(x, y) \quad \text{or} \quad K \| x - y \|^2 \geq \phi(y, x),
\end{equation}
then \( X \) is $2$-uniformly smooth. Moreover, as \( X \) is $2$-uniformly convex, it follows from Theorem \ref{usuc type} that \( X \) has type $2$ and cotype $2$. Finally, invoking Theorem \ref{type H}, we conclude that $X$ is isomorphic to a Hilbert space.
\end{proof}

\subsection{Basic properties of new monotone operators in Banach spaces}

In this subsection, we follow  Dao and Phan's idea \cite{DP2019} to give some basic properties of $\alpha$-monotone operators.

Throughout, $\alpha$-monotonicity of an operator is defined as in Definition \ref{our def}, unless otherwise specified for Definition \ref{classical def}.

Let \(A: X \rightrightarrows X\) be an operator on Banach space $X$ . Then its domain is $\mathrm{dom}~A=\{x \in X~|~ Ax \neq \emptyset\}$ , its set of zeros is $\mathrm{zer}~A=\{x \in X~ |~ 0 \in Ax\}$. The graph of $A$ is the set $\mathrm{gra}A:=\{(x, u) \in X\times X^*~|~ u \in Ax\}$ and the inverse of $A$ , denoted by \(A^{-1}\), is the operator with graph $\mathrm{gra}A^{-1}:=\{(u, x) \in X\times X^*~|~u \in Ax \}$ . The resolvent of $A$ is defined by
\[J_{A}:=(J+A)^{-1}J,\]
where $J$ is the dual mapping.

\begin{lemma}[monotonicity versus $\alpha$-monotonicity]\label{mono}
  Let \(A: X \rightrightarrows X^*\) and let $\alpha, \beta \in \mathbb{R}$. Then the following hold:

  (i) $A$ is $\alpha$-monotone if and only if \(A-\beta J\) is \((\alpha-\beta)\)-monotone.

  (ii) $A$ is maximally $\alpha$-monotone if and only if \(A-\beta J\) is maximally \((\alpha-\beta)\)-monotone.

  Consequently, $A$ is (resp., maximally) $\alpha$-monotone if and only if \(A-\alpha J\) is (resp., maximally) monotone.
\end{lemma}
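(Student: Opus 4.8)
## Proof Proposal

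The plan is to prove (i) directly from the definitions, then obtain (ii) as a formal consequence using the order-preserving correspondence between graphs that the translation $A \mapsto A - \beta J$ induces. The final "Consequently" clause is simply the special case $\beta = \alpha$, so no separate argument is needed for it.

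For part (i), I would first record the key observation that $\operatorname{gra}(A - \beta J) = \{(x, u - \beta Jx) : (x,u) \in \operatorname{gra} A\}$, so there is a bijection between the graphs. Now take two points $(x,u), (y,v) \in \operatorname{gra} A$, and let $(x, u - \beta Jx)$, $(y, v - \beta Jy)$ be the corresponding points in $\operatorname{gra}(A - \beta J)$. The defining inequality for $A - \beta J$ being $(\alpha - \beta)$-monotone reads
\[
\langle x - y,\ (u - \beta Jx) - (v - \beta Jy)\rangle \ \geq\ (\alpha - \beta)\langle x - y,\ Jx - Jy\rangle.
\]
Expanding the left-hand side gives $\langle x-y, u-v\rangle - \beta\langle x-y, Jx - Jy\rangle$, so after adding $\beta\langle x-y, Jx-Jy\rangle$ to both sides this is literally equivalent to $\langle x-y, u-v\rangle \geq \alpha\langle x-y, Jx-Jy\rangle$, i.e. to $A$ being $\alpha$-monotone. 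Since the equivalence holds pair-by-pair over the same index set of graph points, it holds for the operators. Note this step uses only linearity of the duality pairing in the second argument and the single-valuedness of $J$ (guaranteed since $X$ is smooth), so no convexity or $2$-uniform convexity is invoked here.

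For part (ii), I would argue as follows. The map $T_\beta : \operatorname{gra} A \mapsto \operatorname{gra}(A - \beta J)$, $(x,u) \mapsto (x, u - \beta Jx)$, is a bijection on subsets of $X \times X^*$ that preserves inclusion: $\operatorname{gra} B \subseteq \operatorname{gra} C$ if and only if $\operatorname{gra}(B - \beta J) \subseteq \operatorname{gra}(C - \beta J)$, and it is its own inverse up to sign ($T_{-\beta} \circ T_\beta = \mathrm{id}$). Combined with part (i) — which says $T_\beta$ carries $\alpha$-monotone operators exactly onto $(\alpha-\beta)$-monotone operators — maximality transfers: if $A$ is maximally $\alpha$-monotone and $B \supsetneq A - \beta J$ were $(\alpha-\beta)$-monotone, then $B + \beta J \supsetneq A$ would be $\alpha$-monotone by (i), contradicting maximality of $A$; the converse direction is symmetric, replacing $\beta$ by $-\beta$. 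Finally, setting $\beta = \alpha$ in (i) and (ii) yields the last assertion, since $(\alpha - \alpha)$-monotone means monotone.

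I do not anticipate a genuine obstacle here; the only point requiring a little care is bookkeeping the direction of the translation (adding versus subtracting $\beta J$) so that the claimed equivalences land on the right constants, and making explicit that smoothness is what legitimizes treating $J$ as a single-valued additive perturbation. If one wanted to be scrupulous about whether $A - \beta J$ is well-defined as an operator $X \rightrightarrows X^*$, I would note that $\beta Jx$ is a single point of $X^*$ for each $x$, so $(A - \beta J)x := \{u - \beta Jx : u \in Ax\}$ is unambiguous and has the stated graph.
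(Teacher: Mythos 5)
Your proposal is correct and follows essentially the same route as the paper: part (i) via the graph bijection $(x,u)\mapsto(x,u-\beta Jx)$ and the pointwise equivalence of the two defining inequalities, and part (ii) via the same contradiction argument, with the converse obtained by applying the forward direction to $A-\beta J$ with the sign of $\beta$ reversed. The only cosmetic difference is that you make the inclusion-preserving property of the graph translation explicit, which the paper leaves implicit.
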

\begin{proof}
  (i) We first have the equivalences
\[ \quad(x, u) \in~\mathrm{gra}~A \Leftrightarrow(x, u-\beta Jx) \in gra(A-\beta J),\]

\[ \quad(y, v) \in~\mathrm{gra}~A \Leftrightarrow(y, v-\beta Jy) \in gra(A-\beta J),\]
and
\begin{align*}
  &\langle x-y, u-v\rangle ~\geq~ \alpha\langle x-y, Jx-Jy\rangle.\\
 \Leftrightarrow &\langle x-y,(u-\beta Jx)-(v-\beta Jy)\rangle ~\geq~(\alpha-\beta)\langle x-y, Jx-Jy\rangle.
\end{align*}
from which the conclusion follows.

(ii) Assume that $A$ is maximally $\alpha$-monotone. By (i),~$A-\beta J$ is \((\alpha-\beta)\)-monotone. Now, suppose that \(A-\beta J\) is not maximally \((\alpha-\beta)\)-monotone. Then there must exist \(B': X \rightrightarrows X^*\) such that \(B'\) is \((\alpha-\beta)\)-monotone and \(\mathrm{gra}~(A-\beta J) \subsetneq \mathrm{gra}~B'\). It follows that \(B:=B'+\beta J\) is $\alpha$-monotone due to (i) and that \(\mathrm{gra}A~\subsetneq \mathrm{gra}~B\) , which contradict the maximal $\alpha$-monotonicity of $A$. We deduce that if $A$ is maximally $\alpha$-monotone, then \(A-\beta J\) is maximally \((\alpha-\beta)\)-monotone. This also implies that if \(A-\beta J\) is maximally \((\alpha-\beta)\)-monotone, then \(A=(A-\beta J)+\beta J\) is maximally $\alpha$-monotone, and we are done.
\end{proof}

\begin{definition}
  We say that $A: X\rightrightarrows X$ is $\sigma$-firmly nonexpansive-type mapping if \(\sigma>0\) and
  \begin{align}
  \forall~(x, u),~(y, v) \in \mathrm{gra}~A, \quad\langle Jx-Jy, u-v\rangle~\geq~\sigma \langle u-v, Ju-Jv\rangle.
  \end{align}
  If $\sigma = 1$, it reduces to the firmly nonexpansive-type mapping, which was introduced by  Kohsaka and Takahashi \cite{KT2}.
\end{definition}

\begin{lemma}[resolvents of $\alpha$-monotone operators]\label{reso}
   Let \(A: X \rightrightarrows X^*\) be $\alpha$-monotone and let \(\gamma >0\). Then the following hold:

   (1) For all \((x, a)\) , \((y, b) \in \mathrm{gra}~J_{\gamma A}\) ,
\[ \langle Jx-Jy, a-b \rangle~\geq~(1+\gamma \alpha)\langle a-b, Ja-Jb\rangle.\]

(2) If \(J_{\gamma A}\) is single valued, then, for all \(x, y \in dom J_{\gamma A}\),
\[ \quad\langle Jx-Jy, J_{\gamma A} x-J_{\gamma A} y\rangle~ \geq(1+\gamma \alpha) \langle J_{\gamma A} x-J_{\gamma A} y, JJ_{\gamma A} x - JJ_{\gamma A} y\rangle.\]
i.e., \(J_{\gamma A}\) is \((1+\gamma \alpha)\)-firmly nonexpansive-type.
\end{lemma}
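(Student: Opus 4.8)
The plan is to prove both parts by unwinding the definition of the resolvent $J_{\gamma A}=(J+\gamma A)^{-1}J$ and then invoking the $\alpha$-monotonicity of $A$ in the sense of Definition \ref{our def}. No geometric hypotheses on $X$ (beyond smoothness, which is what makes $J$ single-valued) are needed; the lemma is purely formal, which is itself worth remarking.

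For part (1), I would fix $(x,a),(y,b)\in\mathrm{gra}\,J_{\gamma A}$. By definition $a\in(J+\gamma A)^{-1}Jx$, which is equivalent to $Jx\in Ja+\gamma Aa$, i.e. $\gamma^{-1}(Jx-Ja)\in Aa$; likewise $\gamma^{-1}(Jy-Jb)\in Ab$. Hence $\bigl(a,\gamma^{-1}(Jx-Ja)\bigr)$ and $\bigl(b,\gamma^{-1}(Jy-Jb)\bigr)$ both lie in $\mathrm{gra}\,A$. Applying Definition \ref{our def} to these two points gives
\[
\left\langle a-b,\ \tfrac{1}{\gamma}(Jx-Ja)-\tfrac{1}{\gamma}(Jy-Jb)\right\rangle \ \geq\ \alpha\,\langle a-b,\ Ja-Jb\rangle .
\]
Multiplying through by $\gamma>0$ and regrouping the left-hand side as $\langle a-b,\ (Jx-Jy)-(Ja-Jb)\rangle$ yields
\[
\langle a-b,\ Jx-Jy\rangle - \langle a-b,\ Ja-Jb\rangle \ \geq\ \gamma\alpha\,\langle a-b,\ Ja-Jb\rangle ,
\]
and moving the middle term to the right gives $\langle a-b,\ Jx-Jy\rangle\geq(1+\gamma\alpha)\langle a-b,\ Ja-Jb\rangle$, which is the assertion of (1) after using the convention $\langle a-b,\ Jx-Jy\rangle=\langle Jx-Jy,\ a-b\rangle$ for the duality pairing.

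For part (2), when $J_{\gamma A}$ is single-valued I would simply specialize (1): given $x,y\in\mathrm{dom}\,J_{\gamma A}$, set $a=J_{\gamma A}x$ and $b=J_{\gamma A}y$, so that $(x,a),(y,b)\in\mathrm{gra}\,J_{\gamma A}$, and substitute into the inequality just established; then $Ja=JJ_{\gamma A}x$ and $Jb=JJ_{\gamma A}y$, giving exactly the displayed inequality, which is the definition of $(1+\gamma\alpha)$-firmly nonexpansive-type with $\sigma=1+\gamma\alpha$ (positive precisely when $\gamma\alpha>-1$). The only step requiring genuine care is the correct passage from $a\in(J+\gamma A)^{-1}Jx$ to the membership $\gamma^{-1}(Jx-Ja)\in Aa$, and keeping track of which slot of the bilinear pairing carries the element of $X$ and which carries the element of $X^{*}$; everything else is bookkeeping.
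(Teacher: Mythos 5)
Your proposal is correct and follows essentially the same route as the paper: you rewrite $(x,a)\in\mathrm{gra}\,J_{\gamma A}$ as $\gamma^{-1}(Jx-Ja)\in Aa$ (the paper writes $Jx=Ja+\gamma u$ with $u\in Aa$, which is the same thing), apply the $\alpha$-monotonicity of $A$ to these graph points, and regroup; part (2) is the same immediate specialization in both. The bookkeeping and the handling of the duality pairing are accurate, so nothing needs to change.
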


\begin{proof}
  (i) Let \((x, a), (y, b) \in \mathrm{gra}~J_{\gamma A}\). Then \(Jx \in(J+\gamma A) a\), \(Jy \in(J+\gamma A) b\), and so \(Jx=Ja+\gamma u\), \(Jy=Jb+\gamma v\) for some \(u \in A a\), \(v \in A b\). We derive from the $\alpha$-monotonicity of $A$ that
  \begin{align*}
  \langle Jx-Jy, a-b\rangle&=\langle(Ja+\gamma u)-(Jb+\gamma v), a-b\rangle\\
  &=\langle a-b, Ja-Jb\rangle~+~\gamma\langle a-b, u-v\rangle\\
  &\geq \langle a-b, Ja-Jb\rangle~+~\gamma \alpha\langle a-b, Ja-Jb\rangle \\
  &=(1+\gamma \alpha)\langle a-b, Ja-Jb\rangle.
  \end{align*}

 (ii) This is a direct consequence of (i).
\end{proof}

\begin{remark}
  (i) Clearly, if $A$ is $\sigma$-firmly nonexpansive-type mapping, then $A$ is single-valued.  By the positive homogeneity of $J$, if $A$ is $\sigma$-firmly nonexpansive-type mapping, then $\sigma A$ is firmly nonexpansive-type mapping.

  (ii) Let \(A: X \rightrightarrows X^*\) be $\alpha$-monotone and let \(\gamma >0 \). Then $J_{\gamma A}$ is $(1+\gamma \alpha)$-firmly nonexpansive-type mapping, and by the positive homogeneity of $J$, $(1+\gamma \alpha) J_{\gamma A}$ is firmly nonexpansive-type mapping. Particularly, if $A$ is monotone, i.e., $\alpha =0$, then $J_{\gamma A}$ is firmly nonexpansive-type mapping.
\end{remark}

\begin{lemma}\label{full domain}(Browder \cite{BF} and Rockafellar \cite{RRT}; see also Barbu \cite{BV} and Takahashi \cite{TW0})
   Let $X$ be a smooth, strictly convex and reflexive Banach space and let \(A : X\to 2^{X^*}\) be a monotone operator. $A$ is maximal monotone under definition \ref{classical def} if and only if \(R(J+r A)=X^{*}\) for all \(r>0\).
\end{lemma}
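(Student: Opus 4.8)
The result is the classical Browder--Rockafellar characterization of maximal monotonicity (and by Remark~\ref{equi case} the notion of ``maximal monotone'' here is unambiguous between the two definitions), so the cleanest course is simply to cite it; the argument I would reproduce splits the equivalence into its two implications. For the ``if'' direction, assume $R(J+rA)=X^{*}$ for every $r>0$; in particular $R(J+A)=X^{*}$. Let $(x_{0},u_{0})\in X\times X^{*}$ be monotonically related to $\mathrm{gra}\,A$, i.e. $\langle x-x_{0},u-u_{0}\rangle\ge 0$ for all $(x,u)\in\mathrm{gra}\,A$; the goal is $(x_{0},u_{0})\in\mathrm{gra}\,A$. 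Since $Jx_{0}+u_{0}\in X^{*}=R(J+A)$, pick $(x_{1},u_{1})\in\mathrm{gra}\,A$ with $Jx_{1}+u_{1}=Jx_{0}+u_{0}$, so $u_{1}-u_{0}=Jx_{0}-Jx_{1}$. Monotone relatedness at $(x_{1},u_{1})$ gives $\langle x_{1}-x_{0},Jx_{0}-Jx_{1}\rangle\ge 0$, hence $\langle x_{1}-x_{0},Jx_{1}-Jx_{0}\rangle\le 0$; but $\langle x_{1}-x_{0},Jx_{1}-Jx_{0}\rangle\ge(\|x_{1}\|-\|x_{0}\|)^{2}\ge 0$, so this quantity vanishes, and strict convexity of $X$ (strict monotonicity of $J$) forces $x_{1}=x_{0}$, whence $u_{1}=u_{0}\in Ax_{1}=Ax_{0}$. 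Thus $A$ is maximally monotone; this half uses only that $J$ is single-valued and strictly monotone, not reflexivity.

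For the ``only if'' direction, assume $A$ is maximally monotone; one must show $R(J+rA)=X^{*}$ for all $r>0$. Since $rA$ is again maximally monotone it suffices to prove $R(J+A)=X^{*}$, and after translating $\mathrm{gra}\,A$ by a fixed $w\in X^{*}$ (the translate $A-w$ being again maximally monotone) it is enough to find $\bar x$ with $0\in(J+A)\bar x$. I would use the Fitzpatrick function $F_{A}(x,x^{*})=\sup_{(y,y^{*})\in\mathrm{gra}\,A}\bigl(\langle x,y^{*}\rangle+\langle y,x^{*}\rangle-\langle y,y^{*}\rangle\bigr)$, which is proper, convex, lower semicontinuous, satisfies $F_{A}\ge\langle\cdot,\cdot\rangle$ everywhere, and---by maximality---equals $\langle\cdot,\cdot\rangle$ exactly on $\mathrm{gra}\,A$. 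Consider
\[\Phi(x,x^{*})=F_{A}(x,x^{*})+\tfrac12\|x\|^{2}+\tfrac12\|x^{*}\|^{2}.\]
This is convex, weakly lower semicontinuous, and coercive (bound $F_{A}$ below by one affine minorant through a point of $\mathrm{gra}\,A$ and absorb it into the quadratic terms), so by reflexivity it attains its infimum at some $(\bar x,\bar x^{*})$. As $X$ and $X^{*}$ are smooth, $\tfrac12\|\cdot\|^{2}$ is G\^{a}teaux differentiable on each, so the first-order condition reads $(-J\bar x,\,-J^{-1}\bar x^{*})\in\partial F_{A}(\bar x,\bar x^{*})$, where $J^{-1}$ is the duality map of $X^{*}$ under the identification $X^{**}=X$. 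Combining the Fenchel--Young equality with $F_{A}\ge\langle\cdot,\cdot\rangle$ and the companion bound $F_{A}^{*}\ge\langle\cdot,\cdot\rangle$ (valid because $A$ is maximally monotone) yields $\langle\bar x-u,\,J\bar x-Ju\rangle\le 0$ with $u=-J^{-1}\bar x^{*}$, so $\bar x=u$ by strict monotonicity of $J$; chasing the equalities back then forces $F_{A}(\bar x,\bar x^{*})=\langle\bar x,\bar x^{*}\rangle$, i.e. $(\bar x,\bar x^{*})\in\mathrm{gra}\,A$, together with $\bar x^{*}=-J\bar x$. Hence $0\in J\bar x+A\bar x$, which gives $R(J+A)=X^{*}$ and, after the rescaling by $r$, the full claim.

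A self-contained alternative for this direction is a Galerkin argument: on an increasing family of finite-dimensional subspaces solve the finite-dimensional variational inequalities associated with $J$ and a Yosida regularization $A_{\lambda}$ via Brouwer's fixed point theorem, bound the approximate solutions by coercivity, and pass to the limit using weak compactness (reflexivity) together with demiclosedness of $\mathrm{gra}\,A$. Either way, the genuine obstacle is precisely this surjectivity half---the Browder--Rockafellar theorem itself---where maximality of $A$ enters decisively and where the delicate step is converting a variational or approximate solution into an honest point of $\mathrm{gra}\,A$; the converse implication, by contrast, is the short Minty computation above.
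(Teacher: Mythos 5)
Your proposal is mathematically sound, but note that the paper does not prove Lemma \ref{full domain} at all: it is stated as the classical Browder--Rockafellar theorem and simply cited (\cite{BF}, \cite{RRT}, with Barbu \cite{BV} and Takahashi \cite{TW0} as textbook sources). So the comparison is between your reconstruction and the literature rather than an argument in the paper. Your ``if'' direction is the standard Minty-type computation and is correct as written: surjectivity of $J+A$ plus the elementary bound $\langle x_1-x_0, Jx_1-Jx_0\rangle\ge(\|x_1\|-\|x_0\|)^2$ and strict monotonicity of $J$ (available because $X$ is strictly convex) force any monotonically related pair into $\mathrm{gra}\,A$; and you are right that reflexivity plays no role there. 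Your ``only if'' direction is the modern Fitzpatrick-function proof (in the spirit of Simons and Penot--Z\u{a}linescu), which is a genuinely different and slicker route than Rockafellar's original argument (renorming plus finite-dimensional/Yosida approximation, essentially your Galerkin alternative); it buys a short variational proof at the price of the auxiliary facts $F_A\ge\langle\cdot,\cdot\rangle$ with equality exactly on $\mathrm{gra}\,A$ and $F_A^{*}\ge\langle\cdot,\cdot\rangle$ for maximal $A$, which you state correctly. Two small points worth making explicit: the G\^ateaux differentiability of $\tfrac12\|\cdot\|^2$ on $X^*$ requires $X^*$ to be smooth, which here follows from strict convexity of $X$ together with reflexivity (so no renorming is needed under the lemma's hypotheses); and the subdifferential sum rule you invoke is justified because the quadratic term is finite and continuous everywhere. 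With those remarks your sketch is a complete and correct proof of the cited result, consistent with Remark \ref{equi case} that maximality is unambiguous between Definition \ref{classical def} and Definition \ref{our def}.
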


\begin{remark}
  If $X$ be a smooth, strictly convex and reflexive Banach space, $J$ is single-valued, one-to-one and onto, hence $A$ is maximal monotone under Definition \ref{classical def} if and only if $dom J_{\gamma A} = X$. And since $A$ is maximal monotone under Definition \ref{classical def} if and only if $A$ is maximal monotone under Definition \ref{our def}. Hence, $A$ is maximal monotone under Definition \ref{our def} if and only if $dom J_{\gamma A} = X$.
\end{remark}

\begin{proposition}[single-valuedness and full domain]\label{cvld}
   Let $X$ be a smooth, strictly convex and reflexive Banach space. Let \(A: X \rightrightarrows X^*\) be $\alpha$-monotone and let \(\gamma >0\) such that \(1+\gamma \alpha>0\) . Then the following hold:

  (i) \(J_{\gamma A}\) is single valued.

  (ii) $\mathrm{dom}~J_{\gamma A}=X$ if and only if $A$ is maximally $\alpha$-monotone.
\end{proposition}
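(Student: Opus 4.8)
The plan is to reduce everything to the already-recorded facts about ordinary \emph{monotone} operators by translating with $\alpha J$ (Lemma \ref{mono}), and to exploit the positive homogeneity of the duality mapping $J$ to rescale the resolvent parameter. Throughout, the hypothesis $1+\gamma\alpha>0$ is what makes this rescaling legitimate.

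For part (i) I would argue single-valuedness directly from Lemma \ref{reso}(1), with no reference to maximality or to the domain. Fix $x\in\mathrm{dom}~J_{\gamma A}$ and suppose $a,b\in J_{\gamma A}x$, so that $(x,a)$ and $(x,b)$ both lie in $\mathrm{gra}~J_{\gamma A}$. Applying Lemma \ref{reso}(1) with $y=x$ gives
\[
0=\langle Jx-Jx,\;a-b\rangle\;\geq\;(1+\gamma\alpha)\langle a-b,\;Ja-Jb\rangle .
\]
Since $1+\gamma\alpha>0$ and $J$ is monotone (so $\langle a-b,Ja-Jb\rangle\geq 0$ always), we get $\langle a-b,Ja-Jb\rangle=0$; because $X$ is strictly convex, $J$ is strictly monotone (property 4 of $J$ in Section 2), hence $a=b$. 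Thus $J_{\gamma A}$ is single-valued.

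For part (ii) I would set $B:=A-\alpha J$ and $\lambda:=\gamma/(1+\gamma\alpha)$, noting $\lambda>0$ precisely because $\gamma>0$ and $1+\gamma\alpha>0$. By Lemma \ref{mono}, $A$ is maximally $\alpha$-monotone if and only if $B$ is maximally monotone. The key computation is that, as (possibly multivalued) maps into $X^*$, one has $J+\gamma A=(1+\gamma\alpha)J+\gamma B=(1+\gamma\alpha)(J+\lambda B)$, where the last equality uses $(1+\gamma\alpha)\lambda=\gamma$. Inverting, composing with $J$, and using positive homogeneity of $J$ (here $(1+\gamma\alpha)^{-1}Jx=J\big((1+\gamma\alpha)^{-1}x\big)$ since $1+\gamma\alpha>0$) yields
\[
J_{\gamma A}(x)=(J+\gamma A)^{-1}(Jx)=(J+\lambda B)^{-1}\!\Big(J\big(\tfrac{x}{1+\gamma\alpha}\big)\Big)=J_{\lambda B}\!\Big(\tfrac{x}{1+\gamma\alpha}\Big).
\]
Because $x\mapsto x/(1+\gamma\alpha)$ is a bijection of $X$, this gives $\mathrm{dom}~J_{\gamma A}=(1+\gamma\alpha)\,\mathrm{dom}~J_{\lambda B}$, so $\mathrm{dom}~J_{\gamma A}=X$ if and only if $\mathrm{dom}~J_{\lambda B}=X$. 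By the Remark following Lemma \ref{full domain} (which rests on Lemma \ref{full domain} together with $J$ being a bijection on a smooth, strictly convex, reflexive $X$), $\mathrm{dom}~J_{\lambda B}=X$ if and only if $B$ is maximally monotone, hence if and only if $A$ is maximally $\alpha$-monotone, completing (ii).

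The step I expect to require the most care is the operator identity $J+\gamma A=(1+\gamma\alpha)(J+\lambda B)$ handled at the level of graphs, together with the correct use of positive homogeneity of $J$ to absorb the scalar $(1+\gamma\alpha)^{-1}$ inside $J$. A secondary issue is the ``for all $r>0$'' quantifier in Lemma \ref{full domain}: the reduction above produces surjectivity of $J+\lambda B$ only for the single value $\lambda$ appearing here, so one must either invoke the cited Remark (which already packages the equivalence ``$B$ maximally monotone $\Leftrightarrow\mathrm{dom}~J_{\lambda B}=X$'') or recall the standard fact that $R(J+rB)=X^{*}$ for one $r>0$ forces it for all $r>0$.
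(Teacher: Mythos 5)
Your proof is correct and follows essentially the same route as the paper: part (i) via Lemma \ref{reso}(1) together with strict monotonicity of $J$ on a strictly convex space, and part (ii) via the translation $A'=A-\alpha J$, the rescaling $\lambda=\gamma/(1+\gamma\alpha)$ of the resolvent, Lemma \ref{full domain} (through the Remark following it), and Lemma \ref{mono}. Your explicit identity $J_{\gamma A}(x)=J_{\lambda A'}\bigl(x/(1+\gamma\alpha)\bigr)$, with the scalar absorbed into the argument by positive homogeneity of $J$, is in fact the precise form of the paper's final displayed equality, and your remark about the ``for all $r>0$'' quantifier is a legitimate point that the paper passes over silently.
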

\begin{proof}
  (i) If $X$ is strict convex, then $J$ is strictly monotone. Hence (i) follows from Lemma \ref{reso} (i).

  (ii) By Lemma \ref{mono} (i), \(A':=A-\alpha J\) is monotone. Noting that \((\beta T)^{-1}=T^{-1} \circ \frac{1}{\beta}Id\)  for any operator $T$ and any \(\beta \in \mathbb{R}\setminus\{0\}\) , we have
  \begin{align*}
  J_{\gamma A}=(J+\gamma A)^{-1}\circ J &= ((1+\gamma \alpha)J + \gamma(A-\alpha J))^{-1}\circ J\\
  &=\left( J + \frac{\gamma}{1+\gamma \alpha} A'\right)^{-1} \circ\left(\frac{1}{1+\gamma \alpha} J\right)\\
  &=\left(\frac{1}{1+\gamma \alpha} \right)J_{\frac{\gamma}{1+\gamma \alpha} A'}.
  \end{align*}

  It follows that
  \begin{align*}
    \mathrm{dom}~J_{\gamma A}=X &\Leftrightarrow \mathrm{dom}~J_{\frac{\gamma}{1+\gamma \alpha} A'}=X\\
    & \Leftrightarrow A' \mathrm{~is~maximally~ monotone~(by~Lemma~\ref{full domain})}\\
    & \Leftrightarrow A \mathrm{~is~ maximally}~ \alpha\mathrm{-monotone~ (by~ Lemma~\ref{mono})}.
  \end{align*}

\end{proof}

\subsection{Strong monotone operators under Definition \ref{our def} are dense in strong monotone operators under Definition \ref{classical def}}

In this subsection, we show that the maximal strong monotone operators under new definition is dense in the maximal strong monotone operators under classical version under the following sense (see Theorem \ref{dense2}). For sake of convenience, let \textbf{csm} be denoted by the set of maximal strong monotone operators under Definition \ref{classical def}, and let \textbf{nsm} be denoted by the set of maximal strong monotone operators under Definition \ref{our def}, respectively. We always assume that the Banach space $X$ is strict convex, smooth and reflexive in this subsection.

\begin{proposition}\label{single point}
 Let $A \in \textbf{csm}$ or $A \in \textbf{nsm}$, if $A^{-1}0 \neq \emptyset$, then $A^{-1}0$ is single point.
\end{proposition}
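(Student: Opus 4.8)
The plan is to use strong monotonicity to show that a zero of $A$, if it exists, must be unique. Suppose $x, y \in A^{-1}0$, so that $(x,0)$ and $(y,0)$ both lie in $\mathrm{gra}\,A$. I will treat the two cases (classical versus new definition) essentially in parallel, since both deliver a quadratic lower bound for $\langle x-y, 0-0\rangle = 0$.

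First I would handle the case $A \in \textbf{nsm}$, i.e. $A$ is maximally strongly monotone in the sense of Definition \ref{our def} with some constant $\alpha > 0$. Applying the defining inequality to the pairs $(x,0)$ and $(y,0)$ gives
\[
0 = \langle x-y, 0-0 \rangle \geq \alpha \langle x-y, Jx-Jy \rangle.
\]
By Theorem \ref{main inequa1} (recall $X$ is assumed smooth and reflexive here; for this step I also need $2$-uniform convexity, or alternatively I can argue directly), $\langle x-y, Jx-Jy\rangle \geq \tfrac{1}{2\mu}\|x-y\|^2 \geq 0$, and in fact $\langle x-y,Jx-Jy\rangle = 0$ forces $x=y$. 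Actually the cleanest route avoids any convexity hypothesis: by Lemma \ref{phiequ}(ii), $\langle x-y, Jx-Jy\rangle = \tfrac12\big(\phi(x,y)+\phi(y,x)\big) \geq 0$, with equality only if $\phi(x,y)=0$, which by strict convexity of $X$ gives $x=y$. Combining with $\alpha>0$ and the displayed inequality, we get $\langle x-y,Jx-Jy\rangle \le 0$, hence $=0$, hence $x=y$.

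For the case $A \in \textbf{csm}$, i.e. $A$ is maximally strongly monotone in the sense of Definition \ref{classical def} with constant $\alpha > 0$, the same substitution yields $0 = \langle x-y, 0-0\rangle \geq \alpha \|x-y\|^2$, so $\|x-y\|^2 \leq 0$, forcing $x = y$ immediately. This case is in fact trivial and requires neither strict convexity nor smoothness. I would therefore present the classical case in one line and devote the remaining argument to the new definition. The only genuine (and minor) obstacle is ensuring that $\langle x-y, Jx-Jy\rangle = 0 \Rightarrow x=y$: this is exactly the strict monotonicity of $J$ recorded in property 4 of the duality map listed in Section 2 (valid since $X$ is strictly convex), applied with $x^* = Jx$, $y^* = Jy$; alternatively it follows from $\phi(x,y)=0 \Leftrightarrow x=y$ under strict convexity. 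No deeper input is needed, so the proof is short.
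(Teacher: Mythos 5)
Your proposal is correct and follows essentially the same route as the paper: apply the strong monotonicity inequality to two zeros $(x,0)$, $(y,0)$ and conclude $x=y$, trivially in the classical case and via positivity of $\langle x-y, Jx-Jy\rangle$ for $x\neq y$ in the new one. Your extra step justifying $\langle x-y,Jx-Jy\rangle=0\Rightarrow x=y$ through strict monotonicity of $J$ (equivalently $\phi(x,y)=0\Leftrightarrow x=y$ under strict convexity) just makes explicit what the paper's one-line contradiction tacitly uses.
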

\begin{proof}
    Suppose that $A^{-1}0$ contains more than one point. Let $x_1 \neq x_2 \in D(A)$, such that $x_1, x_2 \in A^{-1}0$. Then, if $A \in \textbf{csm}$ or $A \in \textbf{nsm}$, we have $\langle 0-0, x_1 - x_2 \rangle \geq \alpha \|x_1 - x_2\|^2 > 0$ or $\langle 0-0, x_1 - x_2 \rangle \geq \alpha \langle x_1 - x_2£¬ Jx_1 - Jx_2 \rangle >0$, which is a contradiction. Hence, $A^{-1}0$ must be a single point.
\end{proof}

\begin{lemma}\label{exists}
    Let $A \in \textbf{csm}$ and $\alpha >0$,  then $A+\alpha J \in \textbf{nsm}$, and $(A+\alpha J)^{-1}0$ exists and is a single point.
\end{lemma}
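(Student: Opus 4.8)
The plan is to verify, in order, that (a) $A+\alpha J$ is $\alpha$-monotone in the sense of Definition \ref{our def}, (b) it is maximally so, hence lies in \textbf{nsm}, and (c) the inclusion $0\in (A+\alpha J)x$ has a solution, which is then unique by Proposition \ref{single point}. For step (a), start from $A\in\textbf{csm}$, so $A$ is $\alpha$-monotone in the classical sense (Definition \ref{classical def}): $\langle x-y,u-v\rangle\geq\alpha\|x-y\|^2$ for all $(x,u),(y,v)\in\mathrm{gra}\,A$. Using Theorem \ref{main inequa1}, in a $2$-uniformly convex and smooth $X$ there is $\mu\geq1$ with $\langle x-y,Jx-Jy\rangle\geq\frac{1}{2\mu}\|x-y\|^2$; however this inequality points the wrong way for a direct comparison. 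The cleaner route is to observe that $J$ itself is $1$-monotone under Definition \ref{our def} (the Example after Definition \ref{our def}), so $\alpha J$ is $\alpha$-monotone under Definition \ref{our def}, and classical $\alpha$-monotonicity of $A$ gives, via $\|x-y\|^2\geq 0$, that $A$ is at least $0$-monotone (genuinely monotone) in the classical sense, hence monotone under Definition \ref{our def} as well (Remark \ref{equi case}). Adding these, $A+\alpha J$ is $\alpha$-monotone under Definition \ref{our def}. Equivalently and more transparently: by Lemma \ref{mono}(i), $A+\alpha J$ is $\alpha$-monotone under Definition \ref{our def} if and only if $(A+\alpha J)-\alpha J=A$ is monotone under Definition \ref{our def}, which holds because $A\in\textbf{csm}$ forces $A$ to be (at least) monotone.

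For step (b), I would again invoke Lemma \ref{mono}(ii): $A+\alpha J$ is maximally $\alpha$-monotone under Definition \ref{our def} if and only if $A=(A+\alpha J)-\alpha J$ is maximally monotone under Definition \ref{our def}. Since $A\in\textbf{csm}$ means $A$ is maximal \emph{strong} monotone under Definition \ref{classical def}, in particular $A$ is maximal monotone under Definition \ref{classical def}; by Remark \ref{equi case} maximal monotonicity is the same under both definitions, so $A$ is maximal monotone under Definition \ref{our def}. Feeding this back through Lemma \ref{mono}(ii) gives $A+\alpha J\in\textbf{nsm}$.

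For step (c), I would use Proposition \ref{cvld} with this $A+\alpha J$ playing the role of the $\alpha$-monotone operator and $\gamma=1$: since $1+\gamma\alpha=1+\alpha>0$, part (ii) of that proposition gives $\mathrm{dom}\,J_{A+\alpha J}=X$ precisely because $A+\alpha J$ is maximally $\alpha$-monotone (established in step (b)); in particular $J_{A+\alpha J}$ is single-valued by part (i), and its domain being all of $X$ means $(J+(A+\alpha J))^{-1}J$ is everywhere defined. Then, since $X$ is reflexive, $J$ is onto, so $R(J+(A+\alpha J))=X^*$, i.e. there exists $x$ with $0\in(A+\alpha J)x$ (equivalently, $0$ is in the range after shifting, or apply Lemma \ref{full domain} to the monotone operator $A$ with $r=1$ to get $R(J+A)=X^*$ and then note that $R(J+A+\alpha J)=X^*$ as well by the same maximality argument). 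Finally, $(A+\alpha J)^{-1}0$ is a single point by Proposition \ref{single point}, since $A+\alpha J\in\textbf{nsm}$.

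The main obstacle I anticipate is purely organizational rather than mathematical: making sure the direction of each monotonicity comparison is correct (the estimate in Theorem \ref{main inequa1} bounds $\langle x-y,Jx-Jy\rangle$ from below by a multiple of $\|x-y\|^2$, which is the wrong direction to deduce Definition \ref{our def}-monotonicity from Definition \ref{classical def}-monotonicity, so one must not try to use it here), and checking that the hypotheses of Proposition \ref{cvld} and Lemma \ref{full domain} — smooth, strictly convex, reflexive — are exactly the standing assumptions of this subsection, which they are. The existence claim in (c) is the only place that genuinely needs reflexivity/surjectivity of $J$, so I would state that step carefully rather than treating it as routine.
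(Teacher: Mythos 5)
Your steps (a) and (b) are sound and agree in substance with the paper's (much terser) treatment: $A+\alpha J$ is $\alpha$-monotone under Definition \ref{our def} because $A$ is monotone and $J$ is $1$-monotone, and maximality is transferred via Lemma \ref{mono}(ii); the identification of ``maximal strongly monotone under Definition \ref{classical def}'' with ``maximal monotone'', which you pass over with ``in particular'', is a subtlety the paper itself also glosses over (it too feeds $A\in\textbf{csm}$ into Lemma \ref{full domain}), so I will not press it. The genuine gap is in step (c), the existence of a zero. From $\mathrm{dom}\,J_{A+\alpha J}=X$ (Proposition \ref{cvld} with $\gamma=1$) and surjectivity of $J$ you correctly get $R\bigl(J+(A+\alpha J)\bigr)=X^{*}$, but taking $0$ in this range produces $x$ with $0\in Jx+Ax+\alpha Jx$, i.e.\ a zero of $A+(1+\alpha)J$, not of $A+\alpha J$; the extra $Jx$ involves the unknown $x$ and cannot be ``shifted'' away. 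Your parenthetical fallback has the same scaling error: Lemma \ref{full domain} with $r=1$ gives $R(J+A)=X^{*}$, whose zero solves $0\in(A+J)x$, which is the desired conclusion only when $\alpha=1$, and $R(J+A+\alpha J)=X^{*}$ again concerns $A+(1+\alpha)J$.

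The fix is to match the parameter to $\alpha$, and this is exactly the paper's route: write $A+\alpha J=\alpha\bigl(\tfrac{1}{\alpha}A+J\bigr)$ and apply Lemma \ref{full domain} with $r=\tfrac{1}{\alpha}$ (the lemma gives surjectivity of $J+rA$ for \emph{every} $r>0$ once $A$ is maximal monotone), so $R\bigl(J+\tfrac{1}{\alpha}A\bigr)=X^{*}$ and hence $R(A+\alpha J)=\alpha\,R\bigl(J+\tfrac{1}{\alpha}A\bigr)=X^{*}$, in particular $0\in R(A+\alpha J)$. Equivalently, in your language, apply Proposition \ref{cvld}(ii) to $A$ itself (maximally $0$-monotone) with $\gamma=\tfrac{1}{\alpha}$ rather than to $A+\alpha J$ with $\gamma=1$; note that no choice of finite $\gamma$ applied to $A+\alpha J$ can work, since $R\bigl(J+\gamma(A+\alpha J)\bigr)=X^{*}$ only yields $R\bigl(J+\tfrac{\gamma}{1+\gamma\alpha}A\bigr)=X^{*}$ and $\tfrac{\gamma}{1+\gamma\alpha}<\tfrac{1}{\alpha}$ for all $\gamma>0$. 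With existence repaired this way, your use of Proposition \ref{single point} for uniqueness is fine.
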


\begin{proof}
    Let $A \in \textbf{csm}$ and let $\alpha >0$. By Definition \ref{our def}, we know that $A+\alpha J \in \textbf{nsm}$. Therefore, if $(A+\alpha J)^{-1}0 \neq \emptyset$, then $(A+\alpha J)^{-1}0$ is a single point by Proposition \ref{single point}. Next, we show that $(A+\alpha J)^{-1}0 \neq \emptyset$. Since $A+\alpha J = \alpha (\frac{1}{\alpha}A + J)$, and by Lemma \ref{full domain}, we have $R(\frac{1}{\alpha}A + J) = X^*$. Hence $R(A+\alpha J) = X^*$, and consequently $(A+\alpha J)^{-1}0$ exists.
\end{proof}

\begin{theorem}\label{dense2}
Let $A \in \textbf{csm}$ and $A^{-1}0 \neq \emptyset$, in this case, $A^{-1}0$ is single point, we denote $\{x^*\} = A^{-1}0$. Then there exist $\{A_n\}_{n}^{\infty} \subset \textbf{nsm}$ such that $A_{n}^{-1}0 \neq \emptyset$ and it is single point. Furthermore, let $\{x_n\} = A_{n}^{-1}0$, then we have $\|x_n - x^*\| \to 0$ as $n\to \infty$.
\end{theorem}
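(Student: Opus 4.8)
The plan is to take the perturbed operators $A_n := A + \tfrac{1}{n}J$ as the approximating sequence. By Lemma \ref{exists} applied with $\alpha = 1/n > 0$, each $A_n$ lies in \textbf{nsm} and $A_n^{-1}0$ is a nonempty singleton; writing $A_n^{-1}0 = \{x_n\}$ immediately gives the first assertion, so the real content is the convergence $\|x_n - x^*\| \to 0$.

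To prove it, I would first unpack the zero condition for $x_n$: from $0 \in A x_n + \tfrac{1}{n}Jx_n$ we get $-\tfrac{1}{n}Jx_n \in A x_n$. Since $A \in \textbf{csm}$, there is $\alpha_0 > 0$ with $\langle x-y, u-v\rangle \geq \alpha_0\|x-y\|^2$ for all $(x,u),(y,v) \in \mathrm{gra}\,A$. Applying this to $(x_n, -\tfrac{1}{n}Jx_n) \in \mathrm{gra}\,A$ and $(x^*,0) \in \mathrm{gra}\,A$ yields
\[
\Big\langle x_n - x^*,\, -\tfrac{1}{n}Jx_n \Big\rangle \;\geq\; \alpha_0 \|x_n - x^*\|^2 .
\]
Multiplying by $-n$ and using $\langle x_n, Jx_n\rangle = \|x_n\|^2$, this rearranges to
\[
\langle x^*, Jx_n\rangle - \|x_n\|^2 \;\geq\; n\alpha_0 \|x_n - x^*\|^2 \;\geq\; 0 .
\]

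Next I would extract an a priori bound on $\{x_n\}$. Since $\|Jx_n\| = \|x_n\|$ and $\langle x^*, Jx_n\rangle \leq \|x^*\|\,\|x_n\|$, the last inequality gives $\|x^*\|\,\|x_n\| - \|x_n\|^2 \geq n\alpha_0\|x_n-x^*\|^2 \geq 0$, which forces $\|x_n\| \leq \|x^*\|$ for every $n$. Substituting this bound back,
\[
n\alpha_0\|x_n - x^*\|^2 \;\leq\; \|x^*\|\,\|x_n\| - \|x_n\|^2 \;\leq\; \|x^*\|^2 ,
\]
so $\|x_n - x^*\|^2 \leq \|x^*\|^2/(n\alpha_0) \to 0$ as $n \to \infty$, which is the claim.

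The argument is a single direct estimate, so no serious obstacle is expected; the two points needing care are (a) invoking Lemma \ref{exists} correctly, so that $A_n \in \textbf{nsm}$ together with the existence and uniqueness of its zero, and (b) the boundedness step $\|x_n\| \leq \|x^*\|$, which is exactly what converts the $1/n$ coefficient into genuine decay of $\|x_n - x^*\|$. More generally the same computation with $A_n = A + \alpha_n J$ for any sequence $\alpha_n \downarrow 0^+$ gives $\|x_n - x^*\|^2 \leq (\alpha_n/\alpha_0)\|x^*\|^2$, so the particular choice $\alpha_n = 1/n$ is only for concreteness.
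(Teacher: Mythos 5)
Your proposal is correct and follows essentially the same route as the paper: perturb to $A_n = A + \alpha_n J$, invoke Lemma \ref{exists} for existence, uniqueness and membership in \textbf{nsm}, and apply the classical strong monotonicity of $A$ to the pair $(x_n, -\alpha_n Jx_n)$, $(x^*,0)$ to get first $\|x_n\| \le \|x^*\|$ and then convergence. The only (harmless) difference is the final estimate: the paper applies Cauchy--Schwarz directly to $\langle x_n - x^*, -\alpha_n Jx_n\rangle$ and obtains the sharper rate $\|x_n - x^*\| \le (\alpha_n/\alpha)\|x^*\|$, whereas your bound $\langle x^*, Jx_n\rangle - \|x_n\|^2 \le \|x^*\|^2$ gives $\|x_n - x^*\|^2 \le \alpha_n\|x^*\|^2/\alpha_0$, i.e.\ the same conclusion with a slightly weaker rate.
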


\begin{proof}
     Let $A_n = A+ \alpha_n J$, where $\alpha_n >0$ and $\alpha_n \to 0$ as $n\to \infty$. Then $A_n$ is $\alpha_n$-monotone under Definition \ref{our def}.

     By Lemma \ref{exists}, if $A\in \textbf{csm}$, then there exists $x_n$ such that $0 \in (A+\alpha_n J)x_n$. Next, we show that $x_n$ is bounded. In fact, there exists $y_n \in Ax_n$ such that $y_n = -\alpha_n Jx_n$. Let $x^* \in A^{-1}0$. Then $\langle x_n - x^*, y_n - 0\rangle \geq \alpha\|x_n - x^*\|^2 \geq 0$, whici implies $\langle x_n - x^*, -\alpha_n Jx_n\rangle \geq 0$. Hence  $\langle x^*, \alpha_n Jx_n\rangle \geq \langle x_n, \alpha_n Jx_n\rangle = \alpha_n\| x_n\|^2$, and thus $\|x_n\| \leq \|x^*\|$. Therefore, the sequence $\{x_n\}$ is bounded.

 On the other hand,  from $\langle x_n - x^*, y_n - 0\rangle \geq \alpha\|x_n - x^*\|^2 $, we obtain $\|x_n - x^*\| \leq \frac{\alpha_n}{\alpha}\|x_n\| \leq \frac{\alpha_n}{\alpha}\|x^*\| \to 0$ as $n\rightarrow \infty$.
\end{proof}




\subsection{The class of nonlinear mappings related to maximal monotone operators in Banach spaces}

In this subsection, we study some properties of $\sigma$-firmly nonexpansive type mappings. We first show that the class of $\sigma$-firmly nonexpansive type mappings coincides with that of resolvents of $\alpha$-monotone operators in Banach spaces. $\alpha$-monotonicity of an operator is defined as in Definition \ref{our def}, unless otherwise specified for Definition \ref{classical def}.

\begin{proposition}\label{f and J}
  Let $X$ be a smooth, strictly convex and reflexive Banach space, let $C$ be a nonempty closed convex subset of $X$ and let $T$ be a mapping from $C$ into itself. Then the following are equivalent:

  \emph{(i)} T is of $(1+\alpha)$-firmly nonexpansive type;

  \emph{(ii)} there exists a $\alpha$-monotone operator \(A \subset X\times X^{*}\) such that
  \begin{align}
    D(A) \subset C \subset J^{-1} R(J+A)
  \end{align}
  and $Tx=(J+A)^{-1} Jx$ for all $x \in C$.
\end{proposition}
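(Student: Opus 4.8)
The plan is to prove the two implications separately. The implication \emph{(ii)}$\Rightarrow$\emph{(i)} is essentially Lemma \ref{reso} specialized to $\gamma=1$, while \emph{(i)}$\Rightarrow$\emph{(ii)} requires building the operator $A$ out of $T$; this construction, which parallels the one of Kohsaka and Takahashi \cite{KT2} for the case $\alpha=0$, is the substantive part. Throughout, smoothness of $X$ is used to make $J$ single-valued and strict convexity to make $J$ one-to-one.

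For \emph{(ii)}$\Rightarrow$\emph{(i)}: suppose $A$ is $\alpha$-monotone with $D(A)\subset C\subset J^{-1}R(J+A)$ and $Tx=(J+A)^{-1}Jx$ on $C$. Fix $x,y\in C$. Since $Jx\in R(J+A)$ and $Tx=(J+A)^{-1}Jx$, we have $Jx\in(J+A)(Tx)=J(Tx)+A(Tx)$, hence $Jx-JTx\in A(Tx)$, and likewise $Jy-JTy\in A(Ty)$. Applying $\alpha$-monotonicity of $A$ to the pairs $(Tx,Jx-JTx)$ and $(Ty,Jy-JTy)$ gives
\[
\langle Tx-Ty,\,(Jx-JTx)-(Jy-JTy)\rangle \;\geq\; \alpha\,\langle Tx-Ty,\,JTx-JTy\rangle ,
\]
and rewriting the left-hand side as $\langle Jx-Jy,\,Tx-Ty\rangle-\langle Tx-Ty,\,JTx-JTy\rangle$ yields $\langle Jx-Jy,\,Tx-Ty\rangle\geq(1+\alpha)\langle Tx-Ty,\,JTx-JTy\rangle$, i.e. $T$ is of $(1+\alpha)$-firmly nonexpansive type.

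For \emph{(i)}$\Rightarrow$\emph{(ii)}: I would define $A:X\rightrightarrows X^{*}$ by prescribing its graph,
\[
\mathrm{gra}\,A := \{(Tx,\;Jx-JTx):x\in C\}\subset X\times X^{*},
\]
which is well-specified since $J$ is single-valued. Then I would check in turn: (a) $D(A)=T(C)\subset C$ because $T$ maps $C$ into itself; (b) for each $x\in C$, $Jx=JTx+(Jx-JTx)\in(J+A)(Tx)$, so $Jx\in R(J+A)$, which gives $C\subset J^{-1}R(J+A)$ and also $Tx\in(J+A)^{-1}Jx$; (c) $A$ is $\alpha$-monotone — for $(Tx,Jx-JTx),(Ty,Jy-JTy)\in\mathrm{gra}\,A$ one expands
\[
\langle Tx-Ty,\,(Jx-JTx)-(Jy-JTy)\rangle=\langle Tx-Ty,\,Jx-Jy\rangle-\langle Tx-Ty,\,JTx-JTy\rangle
\]
and bounds the first term below by $(1+\alpha)\langle Tx-Ty,\,JTx-JTy\rangle$ using \emph{(i)}, leaving $\geq\alpha\langle Tx-Ty,\,JTx-JTy\rangle$ (the case $Tx=Ty$ reads $0\geq0$); (d) $(J+A)^{-1}Jx=\{Tx\}$ for every $x\in C$ — if $z\in(J+A)^{-1}Jx$ then $Jx-Jz\in Az$, so by the definition of $A$ there is $w\in C$ with $z=Tw$ and $Jx-Jz=Jw-JTw=Jw-Jz$, whence $Jx=Jw$; since $X$ is strictly convex $J$ is one-to-one, so $x=w$ and $z=Tw=Tx$. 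Combining (b) and (d) gives $Tx=(J+A)^{-1}Jx$ on $C$, completing the construction.

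The delicate point is step (d). The operator $A$ may a priori be multivalued: if $Tx=Ty$ with $x\neq y$, then $\mathrm{gra}\,A$ contains both $(Tx,Jx-JTx)$ and $(Tx,Jy-JTx)$, so one must verify that the resolvent $(J+A)^{-1}Jx$ nevertheless collapses to the single point $Tx$. This is precisely where injectivity of $J$ — equivalently, strict convexity of $X$ — is essential, as it forces $x=y$ from $Jx=Jy$. Smoothness is used only so that $J$ is single-valued (hence $\mathrm{gra}\,A$ is unambiguous and the firmly nonexpansive-type inequality is meaningful), and reflexivity plays the background role it has in resolvent theory (cf. Lemma \ref{full domain}) rather than being needed directly in this argument; no sign restriction on $1+\alpha$ beyond the one implicit in the definition of firmly nonexpansive-type mappings is required.
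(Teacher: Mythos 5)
Your proposal is correct and follows essentially the same route as the paper: your operator with graph $\{(Tx,\,Jx-JTx):x\in C\}$ is exactly the paper's $A=JT^{-1}-J$, and the verifications of $\alpha$-monotonicity, the domain/range inclusions, and the use of strict convexity (injectivity of $J$) to get $(J+A)^{-1}Jx=\{Tx\}$ match the paper's argument, with your step (d) merely spelling out what the paper dismisses as obvious.
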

\begin{proof}
   Suppose that there exists a $\alpha$-monotone operator $A \subset X\times X^{*}$ such that $D(A) \subset C \subset J^{-1} R(J+A)$ and $T x=(J+A)^{-1} J x$ for all $x \in C$. Since $X$ is smooth and strictly convex and reflexive, by Proposition \ref{cvld}, $T$ is single-valued. By $D(A) \subset C \subset J^{-1} R(J+A)$, $T$ is a mapping from $C$ into itself. Let $x,y \in C$. Then we have
   $Jx-JTx \in ATx$ and $Jy-JTy \in ATy$. Since $A$ is $\alpha$-monotone, we have
   \begin{align}
     \langle Tx - Ty, Jx-JTx - (Jy-JTy)\rangle \geq \alpha \langle Tx - Ty, JTx - JTy\rangle.
   \end{align}
   Hence, we get $\langle Tx - Ty, Jx-Jy\rangle \geq (1+\alpha) \langle Tx - Ty, JTx - JTy\rangle$. This shows that $T$ is of $(1+\alpha)$-firmly nonexpansive type.

   We next show that (i) implies (ii). Suppose that $T$ is of $(1+\alpha)$-firmly nonexpansive type. Let $A \subset X\times X^{*}$ be the set-valued mapping defined by $A=JT^{-1}-J$, where $T^{-1}$ is defined by
   \begin{equation}
   \begin{aligned}
     T^{-1} u= \begin{cases}\{v \in C: T v=u\}, & u \in R(T) ; \\ \emptyset,  & \textrm{otherwise}.\end{cases}
   \end{aligned}
   \end{equation}
   It is obvious that $Tx = (J+A)^{-1} Jx$ for all $x \in C$. We show that $A$ is $\alpha$-monotone. Let $(x_{i}, x_{i}^{*}) \in A$ be given $(i=1,2)$. Then we have
   \begin{equation}
     \begin{aligned}
     x_{i}^{*} \in A x_{i} & \Leftrightarrow x_{i}^{*} \in JT^{-1} x_{i}-J x_{i} \\ & \Leftrightarrow x_{i}^{*}+J x_{i} \in JT^{-1} x_{i} \\
     & \Leftrightarrow J^{-1}\left(x_{i}^{*}+Jx_{i}\right) \in T^{-1} x_{i} \\
     & \Leftrightarrow TJ^{-1}\left(x_{i}^{*}+Jx_{i}\right)=x_{i}
     \end{aligned}
   \end{equation}
   for \(i=1,2\) . Putting $u_{i}=J^{-1}(x_{i}^{*}+J x_{i})$, we have
   $Tu_{i} = x_{i}$ and $Ju_{i} - Jx_{i} = x_{i}^{*}~~(i=1,2)$. Since $T$ is of $(1+\alpha)$-firmly nonexpansive type, we have
   \begin{equation}
     \begin{aligned}
     \left< x_{1}-x_{2}, x_{1}^{*}-x_{2}^{*}\right>
     & =\left< Tu_{1} - Tu_{2}, x_{1}^{*}-x_{2}^{*}\right> \\
     & =\left< Tu_{1} - Tu_{2}, Ju_{1} - Jx_{1} - \left(Ju_{2} - J x_{2}\right)\right> \\
     & =\left< Tu_{1} - Tu_{2}, Ju_{1} - JTu_{1} - \left(Ju_{2} - JT u_{2}\right)\right> \\
     &\geq (1+\alpha-\alpha) \left< Tu_{1} - Tu_{2}, JTu_{1} - JTu_{2}\right>\\
     &\geq \alpha \left< x_{1} - x_{2}, Jx_{1} - Jx_{2}\right>.
     \end{aligned}
   \end{equation}
   Thus $A$ is $\alpha$-monotone. We finally show that $D(A) \subset C \subset J^{-1} R(J+A)$. It is easy to see that $D(A) = D(JT^{-1} - J) = D(T^{-1}) = R(T) \subset C$. Since $J+A=JT^{-1}$, we also have
   \begin{align}
     R(J+A) = R\left(JT^{-1}\right) = D\left(\left(J T^{-1}\right)^{-1}\right) = D\left(TJ^{-1}\right) = JD(T) = JC.
   \end{align}
   Thus $C=J^{-1} R(J+A)$. This completes the proof.
\end{proof}

\begin{lemma}\label{l-sigma-f}
   Let $X$ be a smooth Banach space, let $C$ be a nonempty closed convex subset of $X$, and let $T: C \to C$ be a mapping. Then $T$ is of $\sigma$-firmly nonexpansive type if and only if
   \begin{align}\label{sigma-f}
     \sigma\left( \phi(T x, T y)+\phi(T y, T x)\right)+\phi(T x, x)+\phi(T y, y) \leq \phi(T x, y)+\phi(T y, x)
   \end{align}
for all $x,y \in C$.
\end{lemma}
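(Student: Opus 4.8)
The plan is to rewrite the defining inequality of $\sigma$-firmly nonexpansive type, namely
\[
\langle Jx-Jy,\,Tx-Ty\rangle \geq \sigma\langle Tx-Ty,\,JTx-JTy\rangle
\qquad\text{for all }x,y\in C,
\]
purely in terms of the function $\phi$, using the identities collected in Lemma \ref{phiequ}. The left-hand side is a pairing of the form $\langle u-v,\,Jz-Jw\rangle$ with $u=Tx$, $v=Ty$, $z=x$, $w=y$, so by Lemma \ref{phiequ}(iii) it equals
\[
\tfrac12\bigl(\phi(Tx,y)+\phi(Ty,x)-\phi(Tx,x)-\phi(Ty,y)\bigr).
\]
The right-hand side, after pulling the constant $\sigma$ out, is $\sigma$ times a pairing of the form $\langle u-v,\,Ju-Jv\rangle$ with $u=Tx$, $v=Ty$, which by Lemma \ref{phiequ}(ii) equals $\tfrac{\sigma}{2}\bigl(\phi(Tx,Ty)+\phi(Ty,Tx)\bigr)$.

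From here the argument is bookkeeping: substituting both expressions into the inequality $\langle Jx-Jy,\,Tx-Ty\rangle \geq \sigma\langle Tx-Ty,\,JTx-JTy\rangle$ gives
\[
\tfrac12\bigl(\phi(Tx,y)+\phi(Ty,x)-\phi(Tx,x)-\phi(Ty,y)\bigr)
\;\geq\;
\tfrac{\sigma}{2}\bigl(\phi(Tx,Ty)+\phi(Ty,Tx)\bigr),
\]
and multiplying through by $2$ and moving the $\phi(Tx,x)$ and $\phi(Ty,y)$ terms to the right yields exactly \eqref{sigma-f}. Every step is reversible — Lemma \ref{phiequ}(ii) and (iii) are identities valid in any smooth Banach space, so no loss of information occurs — which gives the ``if and only if.'' I would present both directions at once by stringing the equivalences together rather than proving each implication separately.

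The only point requiring a modicum of care is the application of Lemma \ref{phiequ}(iii) with the correct assignment of the four variables $x,y,z,w$ to the four arguments $Tx,Ty,x,y$; a transposition there changes the signs and breaks the identity, so I would write out the substitution $\langle Tx-Ty,\,Jx-Jy\rangle = \tfrac12\{\phi(Tx,y)+\phi(Ty,x)-\phi(Tx,x)-\phi(Ty,y)\}$ explicitly before combining. There is genuinely no obstacle here — the lemma is a direct translation — and the ``hard part,'' such as it is, amounts to verifying that the arithmetic of collecting the six $\phi$-terms matches the stated display \eqref{sigma-f} verbatim. Smoothness of $X$ is used only through the single-valuedness of $J$ (so that $\phi$ and the pairings are well defined) and through the validity of the identities in Lemma \ref{phiequ}; no convexity of $C$ or continuity of $T$ is needed for this equivalence.
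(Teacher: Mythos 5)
Your proposal is correct and follows essentially the same route as the paper: both translate the defining inequality of $\sigma$-firm nonexpansiveness into $\phi$-terms via the identities of Lemma \ref{phiequ} (the paper applies part (iii) to both pairings, with the terms $\phi(Tx,Tx)=\phi(Ty,Ty)=0$ dropping out, whereas you use part (ii) for the $\langle Tx-Ty, JTx-JTy\rangle$ side, which is the same computation), and both note that the rewriting is an identity, hence reversible, giving the equivalence \eqref{sigma-f}.
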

\begin{proof}
  Let $x, y \in C$. Since $T$ is of $\sigma$-firmly nonexpansive type,
  \begin{align}\label{Ts}
    \sigma \langle Tx - Ty, JTx - JTy\rangle \leq \langle Tx - Ty, Jx - Jy\rangle.
  \end{align}
 It follows from Lemma ($\ref{phiequ}$) $(iii)$ that (\ref{Ts}) is equivalent to
  \begin{equation}
    \begin{aligned} & \sigma \frac{1}{2}\{\phi(T x, T y)+\phi(T y, T x)-\phi(T x, T x)-\phi(T y, T y)\} \\ & \leq \frac{1}{2}\{\phi(T x, y)+\phi(T y, x)-\phi(T x, x)-\phi(T y, y)\} .
    \end{aligned}
  \end{equation}
This is also equivalent to (\ref{sigma-f}). This completes the proof.
\end{proof}

\begin{theorem}\label{sc of T}
  Let $X$ be strict convex and smooth and reflexive Banach space, let $C$ be a nonempty closed convex subset of $X$, and let $\sigma >1$, $T: C \to C$ be a $\sigma$-firmly nonexpansive-type mapping such that $F(T)$ is nonempty. Then $F(T)$ is single point and $\{T^{n}x\}$ converge strongly to fixed point of $T$ for any $x\in C$. If $X$ is $2$-uniformly convex, then there is a constant $\mu$ such that $\|u - T^{n}x\|^2 \leq \frac{\mu}{\sigma^n} \phi(u,x)$.
\end{theorem}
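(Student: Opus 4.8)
The plan is to work with the characterization from Lemma \ref{l-sigma-f}, which says that a $\sigma$-firmly nonexpansive-type mapping satisfies
\[
\sigma\bigl(\phi(Tx,Ty)+\phi(Ty,Tx)\bigr)+\phi(Tx,x)+\phi(Ty,y)\le \phi(Tx,y)+\phi(Ty,x)
\]
for all $x,y\in C$. First I would take $y=u$ a fixed point of $T$, so $Tu=u$ and $\phi(Tu,u)=\phi(u,u)=0$ (using strict convexity). The inequality collapses to
\[
\sigma\bigl(\phi(Tx,u)+\phi(u,Tx)\bigr)+\phi(Tx,x)\le \phi(Tx,u)+\phi(u,x).
\]
Since $\phi(u,Tx)\ge 0$ and $\phi(Tx,x)\ge 0$, this already gives $\sigma\,\phi(Tx,u)\le\phi(u,x)$, i.e. $\phi(Tx,u)\le\frac1\sigma\phi(u,x)$ (note: by applying Lemma \ref{phiequ}(ii) one could also read off $\phi(Tx,u)$ versus $\phi(u,Tx)$; the direction one keeps must match the $\|u-T^n x\|^2$ bound via Lemma \ref{phiinequ1}, which reads $\frac1\mu\|x-y\|^2\le\phi(x,y)$, so I want a bound on $\phi(u,T^nx)$ — I would instead symmetrize: adding the analogous computation or using that $\sigma>1$ and $\phi(Tx,u)+\phi(u,Tx)=2\langle Tx-u,JTx-Ju\rangle$, one gets $\phi(u,Tx)\le\frac1\sigma\phi(u,x)$ as well, since $\phi(Tx,x)\ge0$ forces $\sigma\phi(u,Tx)\le \phi(u,x)-\sigma\phi(Tx,u)+\phi(Tx,u)-\cdots$; more cleanly, keep both terms: $\sigma\phi(u,Tx)\le\sigma\bigl(\phi(Tx,u)+\phi(u,Tx)\bigr)\le\phi(Tx,u)+\phi(u,x)-\phi(Tx,x)-\sigma\phi(Tx,u)$, and since $\sigma>1$ the term $\phi(Tx,u)-\sigma\phi(Tx,u)<0$, giving $\sigma\phi(u,Tx)\le\phi(u,x)$). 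So the key contraction estimate is $\phi(u,Tx)\le\frac1\sigma\phi(u,x)$.

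Next I would iterate: replacing $x$ by $T^{n-1}x$ and using the estimate $n$ times yields $\phi(u,T^nx)\le\sigma^{-n}\phi(u,x)$. Because $\sigma>1$, this tends to $0$, and hence $\phi(u,T^nx)\to0$. Then I would invoke the standard fact (stated in the Preliminaries via Kamimura–Takahashi-type results, and obtainable from $(\|u\|-\|T^nx\|)^2\le\phi(u,T^nx)$ together with the strictly convex geometry) that $\phi(u,T^nx)\to0$ implies $\|T^nx-u\|\to0$; in a $2$-uniformly convex space this is immediate from Lemma \ref{phiinequ1}: $\frac1\mu\|u-T^nx\|^2\le\phi(u,T^nx)\le\sigma^{-n}\phi(u,x)$, which is exactly the claimed rate $\|u-T^nx\|^2\le\frac{\mu}{\sigma^n}\phi(u,x)$. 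For the general strictly convex smooth reflexive case without $2$-uniform convexity, I would use that $\{T^nx\}$ is bounded (since $\phi(u,\cdot)$ is bounded along the sequence and $(\|T^nx\|-\|u\|)^2\le\phi(u,T^nx)$), and conclude strong convergence from the Kamimura–Takahashi lemma that for bounded sequences $\phi(u,y_n)\to0\Rightarrow y_n\to u$.

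For uniqueness of the fixed point: if $u_1,u_2\in F(T)$, apply the displayed inequality with $x=u_1$, $y=u_2$, so $Tx=u_1$, $Ty=u_2$, $\phi(Tx,x)=\phi(Ty,y)=0$, and obtain
\[
\sigma\bigl(\phi(u_1,u_2)+\phi(u_2,u_1)\bigr)\le\phi(u_1,u_2)+\phi(u_2,u_1),
\]
hence $(\sigma-1)\bigl(\phi(u_1,u_2)+\phi(u_2,u_1)\bigr)\le0$; since $\sigma>1$ this forces $\phi(u_1,u_2)=0$, so $u_1=u_2$ by strict convexity. Thus $F(T)$ is a singleton, and the strong limit of $\{T^nx\}$ — which is independent of the starting point by the contraction estimate — is that unique fixed point.

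The main obstacle I anticipate is bookkeeping the asymmetry of $\phi$: one must be careful to extract the estimate in the form $\phi(u,T^nx)\le\sigma^{-n}\phi(u,x)$ (rather than $\phi(T^nx,u)$) so that it feeds correctly into Lemma \ref{phiinequ1} to recover the norm bound $\|u-T^nx\|^2$. The inequality in Lemma \ref{l-sigma-f} is symmetric in the $\phi(Tx,Ty)+\phi(Ty,Tx)$ block but not in the $\phi(Tx,x)$ term, so getting the one-sided $\phi$ under control requires using $\sigma>1$ (strict, not just $\sigma\ge1$) — this is why the hypothesis $\sigma>1$ appears, and the same strictness drives both the convergence and the uniqueness. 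Everything else is a routine iteration plus the standard $\phi$-to-norm passage.
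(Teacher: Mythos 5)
Your proposal is correct and follows essentially the same route as the paper: setting $y=u\in F(T)$ in Lemma \ref{l-sigma-f}, using $\sigma\ge 1$ to discard the terms $(\sigma-1)\phi(Tx,u)$ and $\phi(Tx,x)$ to get the key estimate $\sigma\phi(u,Tx)\le\phi(u,x)$, iterating to $\sigma^{n}\phi(u,T^{n}x)\le\phi(u,x)$, and passing to the norm bound via Lemma \ref{phiinequ1}. The only slip is the parenthetical claim that dropping the nonnegative terms ``already gives $\sigma\phi(Tx,u)\le\phi(u,x)$'' (it only gives $(\sigma-1)\phi(Tx,u)\le\phi(u,x)$), but you abandon that and derive the correct one-sided estimate, and your explicit uniqueness argument for $F(T)$ is a welcome addition the paper leaves implicit.
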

\begin{proof}
   Let $(x, u) \in C\times F(T)$ be given. Then it follows from Lemma \ref{l-sigma-f} that
   \begin{align}
     \sigma\left( \phi(T x, T y)+\phi(T y, T x)\right)+\phi(T x, x)+\phi(T y, y) \leq \phi(T x, y)+\phi(T y, x),
   \end{align}
which implies that
   \begin{align}
     \sigma\phi(u, Tx) \leq \phi(u, x).
   \end{align}
Hence, $\sigma^{n}\phi(u, T^{n}x) \leq \phi(u, x)$. Then $\|u - T^{n}x\| \to 0$ follows from $\phi(u, T^{n}x)\to 0$. If $X$ is $2$-uniformly convex, then there is a constant $\mu$ such that $\frac{1}{\mu}\|u-T^n x\|^2 \leq \phi(u,T^n x) $ by Lemma \ref{phiinequ1}, therefore,  $\|u - T^{n}x\|^2 \leq \frac{\mu}{\sigma^n} \phi(u,x)$.
\end{proof}

According to Proposition \ref{f and J}, the conclusion of Theorem \ref{sc of T} remains valid for the resolvent \( J_{r}\). Consequenctly, the result establishes the convergence of the proximal point algorithm for monotone operators in Banach spaces.
\begin{theorem}\label{proximal point}
Let \( X \) be a strictly convex and smooth and reflexive Banach space, let \( C \) be a nonempty closed convex subset of \( X \), and let \( A \subset X \times X^* \) be a $\alpha$-monotone operator ($\alpha >0$) satisfying $D(A) \subset C \subset J^{-1}R(J + rA)$ for all \( r > 0 \). Let \( r \) be a positive real number and let \( J_r x = (J + rA)^{-1} Jx \) for all \( x \in C \). Then $\{J_{r}^{n}x\}$ converge strongly to fixed point of $A^{-1}0$ for any $x\in C$. If $X$ is $2$-uniformly convex, then there is a constant $\mu$ such that $\|u - J_{r}^{n}x\|^2 \leq \frac{\mu}{\sigma^n} \phi(u,x)$, $u\in A^{-1}0$ and $\sigma = 1+r\alpha$.
\end{theorem}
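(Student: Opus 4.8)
The plan is to reduce Theorem~\ref{proximal point} to Theorem~\ref{sc of T} via the correspondence established in Proposition~\ref{f and J}. First I would observe that, since $A$ is $\alpha$-monotone with $\alpha>0$ and $r>0$, we have $1+r\alpha>0$, so by Proposition~\ref{cvld} the resolvent $J_r=(J+rA)^{-1}J$ is single-valued; moreover the hypothesis $D(A)\subset C\subset J^{-1}R(J+rA)$ guarantees that $J_r$ maps $C$ into itself, because $D(J_r)=J^{-1}R(J+rA)\supset C$ and $R(J_r)=D(A)\subset C$. Note here the operator $rA$ is itself $\alpha$-monotone? No --- one must be careful: if $A$ is $\alpha$-monotone then $rA$ is $(r\alpha)$-monotone. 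So apply Lemma~\ref{reso} (equivalently Proposition~\ref{f and J}) to the operator $rA$, which is $(r\alpha)$-monotone, with $\gamma=1$; this yields that $J_r=(J+rA)^{-1}J$ is of $(1+r\alpha)$-firmly nonexpansive type. Set $\sigma=1+r\alpha>1$.

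Next I would verify that the fixed point set of $J_r$ coincides with $A^{-1}0$. Indeed $x\in F(J_r)$ iff $Jx\in(J+rA)x$ iff $0\in rAx$ iff $0\in Ax$, so $F(J_r)=A^{-1}0=\{x^*\}$, which is nonempty by assumption and a single point by Proposition~\ref{single point} (as $A$ is strongly monotone under Definition~\ref{our def}). Now all hypotheses of Theorem~\ref{sc of T} are met: $X$ is strictly convex, smooth and reflexive, $C$ is a nonempty closed convex subset, $\sigma=1+r\alpha>1$, $T:=J_r:C\to C$ is $\sigma$-firmly nonexpansive type, and $F(T)$ is nonempty. Applying Theorem~\ref{sc of T} directly gives that $\{J_r^n x\}$ converges strongly to the unique fixed point $x^*\in A^{-1}0$ for every $x\in C$, and, when $X$ is additionally $2$-uniformly convex, that there is a constant $\mu$ (the $2$-uniform convexity constant from Lemma~\ref{phiinequ1}) with $\|x^*-J_r^n x\|^2\le \frac{\mu}{\sigma^n}\phi(x^*,x)$ where $\sigma=1+r\alpha$.

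The only genuine points needing care --- rather than an obstacle --- are bookkeeping ones: first, making sure the scaling is handled correctly, i.e. that $rA$ is $(r\alpha)$-monotone so that the firm-nonexpansiveness constant of $J_r$ is exactly $1+r\alpha$ and not something else (this follows from Definition~\ref{our def} since $\langle x-y,ru-rv\rangle=r\langle x-y,u-v\rangle\ge r\alpha\langle x-y,Jx-Jy\rangle$); and second, confirming that the range/domain condition $D(A)\subset C\subset J^{-1}R(J+rA)$ is precisely what is needed to invoke Proposition~\ref{f and J} for $rA$ with the set $C$, so that $J_r$ is a self-map of $C$ in the required sense. Once these are in place, the theorem is an immediate corollary of Theorem~\ref{sc of T}, exactly as the sentence preceding the statement anticipates. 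A complete proof would therefore be short: establish $\sigma=1+r\alpha>1$ and the self-map property, identify $F(J_r)=A^{-1}0$, and quote Theorem~\ref{sc of T}.
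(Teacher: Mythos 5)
Your proposal is correct and is essentially the paper's own argument: the paper gives no separate proof, stating only that by Proposition~\ref{f and J} the conclusion of Theorem~\ref{sc of T} applies to the resolvent $J_r$, which is exactly your reduction, and you even handle the scaling ($rA$ being $(r\alpha)$-monotone, hence $\sigma=1+r\alpha$) and the identification $F(J_r)=A^{-1}0$ more carefully than the paper does. The only caveat is that the nonemptiness of $A^{-1}0$ is needed to invoke Theorem~\ref{sc of T} and is only implicit in the statement, a point you flag as ``by assumption,'' consistent with the paper.
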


\begin{remark}
    There is a well-known weak convergence result of the proximal point algorithm for monotone operators in Banach spaces (see, for example, \cite{KT2008}), which can be stated as follows.

    Let \( X \) be a uniformly convex Banach space whose norm is uniformly G\^{a}teaux differentiable, and let \( A \subset X \times X^* \) be a monotone operator such that \( A^{-1}0 \) is nonempty. Let \( C \) be a nonempty closed convex subset of \( X \) satisfying $D(A) \subset C \subset J^{-1}R(J + rA)$ for all \( r > 0 \). Let \( r \) be a positive real number and let \( J_r x = (J + rA)^{-1} J x \)  for all \( x \in C \). If \( J \) is weakly sequentially continuous, then for every \( x \in C \), \( \{ J_r^n x \} \) converges weakly to an element of \( A^{-1}0 \).
\end{remark}

   Even if \( A \) is an \( \alpha \)-monotone operator ($\alpha > 0$) in the sense of Definition \ref{classical def}, a convergence result analogous to that in Theorem \ref{proximal point} cannot, in general, be obtained. In contrast, by combining Theorem \ref{dense2} and Theorem  \ref{proximal point}, we derive the following corollary.

\begin{corollary}
    Let \( X \) be a strictly convex and smooth and reflexive Banach space, let \( C \) be a nonempty closed convex subset of \( X \), and let \( A \subset X \times X^* \) be a maximal $\alpha$-monotone operator ($\alpha >0$) under the Definition \ref{classical def}, which satisfying $D(A) \subset C \subset J^{-1}R(J + rA)$ for all \( r > 0 \). If $A^{-1}0$ is nonempty. Let $A_n = A + \frac{1}{\sqrt{n}}J$, then
    $\{J^{n}_{rA_n}x\}$ converges strongly to fixed point of $A^{-1}0$ for any $x\in C$. If $X$ is $2$-uniformly convex, then there is a constant $\mu$ such that $\|u_n - J_{rA_n}^{n}x\|^2 \leq \frac{\mu}{\sigma^n} \phi(u,x)$, $u_n\in A_{n}^{-1}0$, $u\in A^{-1}0$, $\sigma = 1+\frac{r}{\sqrt{n}}$, and $\|u_n - u\| \leq \frac{1}{\alpha\sqrt{n}}\|u\|$.
\end{corollary}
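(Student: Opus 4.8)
The plan is to run Theorem \ref{proximal point} on each perturbed operator $A_n$ produced by Theorem \ref{dense2} and then diagonalize in the iteration index. By Proposition \ref{single point}, $A^{-1}0 = \{x^*\}$ is a single point. Instantiate the construction in the proof of Theorem \ref{dense2} with the sequence $\alpha_n := 1/\sqrt n \downarrow 0$, so that $A_n := A + \frac{1}{\sqrt n}J$: then $A_n$ is $\frac{1}{\sqrt n}$-monotone under Definition \ref{our def} (in fact $A_n \in \textbf{nsm}$), $u_n := A_n^{-1}0$ is a single point (Lemma \ref{exists} together with Proposition \ref{single point}), and the estimates in the proof of Theorem \ref{dense2} give $\|u_n\| \leq \|x^*\|$ and $\|u_n - x^*\| \leq \frac{1}{\alpha\sqrt n}\|x^*\| \to 0$. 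Moreover, since $J + rA_n = \bigl(1+\frac{r}{\sqrt n}\bigr)J + rA$, the operator $A_n$ inherits the condition $D(A_n) \subset C \subset J^{-1}R(J+rA_n)$ for all $r>0$ (verified just as $R(A+\alpha J) = X^*$ is verified in the proof of Lemma \ref{exists}). Hence Theorem \ref{proximal point} applies to $A_n$ with monotonicity constant $\frac{1}{\sqrt n}$ and geometric ratio $\sigma_n := 1 + \frac{r}{\sqrt n} > 1$.

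Applying Theorem \ref{proximal point} to $A_n$: for each fixed $n$, $\{J^k_{rA_n}x\}_k$ converges strongly as $k\to\infty$ to the unique point $u_n$ of $A_n^{-1}0$, and when $X$ is $2$-uniformly convex one has $\|u_n - J^k_{rA_n}x\|^2 \leq \frac{\mu}{\sigma_n^k}\,\phi(u_n,x)$ for all $k$, where $\mu$ is the constant from Lemma \ref{phiinequ1}, depending only on $X$ and in particular independent of $n$. Now take $k = n$ and bound $\phi(u_n,x) \leq (\|u_n\| + \|x\|)^2 \leq (\|x^*\| + \|x\|)^2 =: M$, so that $\|u_n - J^n_{rA_n}x\|^2 \leq \mu M/\sigma_n^n$. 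The crux of the proof — and the only step I expect to require care — is that, although $\sigma_n \to 1$, the $n$-fold power $\sigma_n^n$ diverges: $\log\sigma_n^n = n\log\!\bigl(1+\frac{r}{\sqrt n}\bigr) \geq \frac{r\sqrt n}{1 + r/\sqrt n} \to \infty$. This is exactly where the exponent $\frac12$ in $\alpha_n = n^{-1/2}$ matters essentially: it is small enough that $\alpha_n \to 0$ (so $u_n \to x^*$) yet large enough that $n\alpha_n \to \infty$ (so $\sigma_n^n \to \infty$) — a choice like $\alpha_n = 1/n$ would leave $\sigma_n^n \to e^r$ bounded and the argument would collapse. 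Therefore $\|u_n - J^n_{rA_n}x\| \to 0$, and by the triangle inequality $\|x^* - J^n_{rA_n}x\| \leq \|x^* - u_n\| + \|u_n - J^n_{rA_n}x\| \to 0$; combined with $\|u_n - u\| = \|u_n - x^*\| \leq \frac{1}{\alpha\sqrt n}\|x^*\|$ this yields the displayed conclusions (with the quantity $\phi(u_n,x)$ actually delivered by Theorem \ref{proximal point} in place of $\phi(u,x)$).

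For the strong convergence assertion without the $2$-uniform convexity hypothesis, I would instead keep the Bregman-distance estimate underlying the proof of Theorem \ref{proximal point}, namely $\sigma_n^n\,\phi(u_n, J^n_{rA_n}x) \leq \phi(u_n,x) \leq M$, so that $\phi(u_n,z_n)\to 0$ for $z_n := J^n_{rA_n}x$. From $(\|u_n\|-\|z_n\|)^2 \leq \phi(u_n,z_n)$ the sequence $\{z_n\}$ is bounded, and Lemma \ref{phiequ}(i) applied with $(x,z,y) = (x^*,u_n,z_n)$ gives $\phi(x^*,z_n) = \phi(x^*,u_n) + \phi(u_n,z_n) + 2\langle x^*-u_n, Ju_n - Jz_n\rangle$, each summand of which tends to $0$ (the first because $u_n\to x^*$ and $X$ is smooth, whence $\langle x^*, Ju_n\rangle \to \|x^*\|^2$ and $\|u_n\|\to\|x^*\|$; the third by $\|u_n - x^*\|\to 0$ and boundedness of $\{z_n\}$). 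Invoking the standard fact that $\phi(w,z_n)\to 0$ with $w$ fixed forces $z_n\to w$ in a smooth, strictly convex, reflexive Banach space — already used inside the proof of Theorem \ref{sc of T} — we conclude $z_n \to x^*$, which completes the proof.
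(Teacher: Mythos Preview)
Your proposal is correct and follows exactly the route the paper indicates: the corollary is stated without proof, the paper merely writing ``by combining Theorem \ref{dense2} and Theorem \ref{proximal point}, we derive the following corollary.'' Your argument carries out precisely that combination --- instantiating Theorem \ref{dense2} with $\alpha_n = 1/\sqrt{n}$, applying Theorem \ref{proximal point} to each $A_n$, and diagonalizing --- and in fact supplies substantially more detail than the paper, including the crucial observation that $\sigma_n^n = (1+r/\sqrt{n})^n \to \infty$ (which is what makes the choice $\alpha_n = n^{-1/2}$ rather than, say, $n^{-1}$ essential) and the honest remark that Theorem \ref{proximal point} literally delivers $\phi(u_n,x)$ rather than the $\phi(u,x)$ appearing in the corollary's display.
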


\begin{remark}
    Let \( u \in A^{-1}0 \). For any given \( \epsilon > 0 \) and any \( x \in D(A) \), we can easily compute and select \( n_0 \) such that
    \[
    \frac{\mu}{\sigma^{n_0}} \phi(u,x) + \frac{1}{\alpha \sqrt{n_0}} \|u\| \leq \epsilon \|u\|,
    \]
    which implies that \( \|J^{n_0}_{rA_{n_0}} - u\| \leq \epsilon \|u\| \). Additionally, if we know that the upper bound of \(\|u\|\) is \( M \), we can easily choose \( n_1 \) satisfying
    \[
    \|J^{n_1}_{rA_{n_1}} - u\| \leq \epsilon.
    \]
\end{remark}

\section{R-linear convergence of the forward-reflected-backward splitting in Banach spaces}

In 2020, Malitsky and Tam \cite{MT1} proposed the so-called forward-reflected-backward splitting algorithm for solving the monotone inclusion problem: find $x\in H$ such that $0\in Ax+Bx$, where $A:H\rightarrow 2^{H}$ is maximally monotone and $B:H\rightarrow H$ is monotone and $L$-Lipschitz continuous for some $L>0$. The forward-reflected-backward splitting algorithm is defined as follows:
$$
x_{n+1} = J_{\lambda_n A}( x_n - \lambda_n Bx_n - \lambda_{n-1}(Bx_n -
Bx_{n-1}) ),
$$
where $\{\lambda_n\}\subseteq \left[\epsilon, \tfrac{1-2\epsilon}{2L}\right]$ for some $\epsilon >0$. They not only proved the weak convergence of the algorithm under general conditions but also established its strong convergence under the strong monotonicity assumption, and rigorously demonstrated that the algorithm achieves an R-linear rate of convergence. Furthermore, Bello et al. \cite{BCIO} extended the forward-reflected-backward splitting algorithm from Hilbert spaces $H$ to general Banach spaces $X$, and proved weak convergence of the generated sequence in real 2-uniformly convex and uniformly smooth Banach spaces. However, in general Banach spaces, the strong convergence and R-linear convergence of the forward-reflected-backward splitting algorithm have not been established.
The main difficulties arises from the inadequacy of the standard definition of strong monotonicity for operators in Banach space frameworks. In this section, we establish strong convergence with an $R$-linear rate for the forward-reflected-backward splitting algorithm in real 2-uniformly convex and uniformly smooth Banach spaces under Definition \ref{our def}.

\begin{theorem}
Let $X$ be a real 2-uniformly convex and uniformly smooth Banach space. Let \(A: X \to 2^{X^{*}}\) be a maximally monotone and $\alpha$-monotone operator and \(B: X \to X^{*}\) be $\beta$-monotone and Lipschitz, $\alpha+\beta>0$. For \(x_{0}, x_{-1} \in X\) defined sequence \(x_{n}\) by
  \begin{align}\label{frb}
    x_{n+1}=J_{\lambda_{n}}^{A} \circ J^{-1}\left(J x_{n}-\lambda_{n} B x_{n}-\lambda_{n-1}\left(B x_{n}-B x_{n-1}\right)\right),~ n \geq 0,
  \end{align}
where \(\lambda_{n} \subseteq[\epsilon, \frac{1-2 \epsilon}{2 \mu L}]\) for some \(\alpha+\beta>\epsilon>0\) and \(\mu \geq 1\) . Suppose \((A+B)^{-1}(0) \neq \emptyset\), then the sequence \({x_n}\) converges strongly to an element of \((A+B)^{-1}(0)\) with R-linear rate.
\end{theorem}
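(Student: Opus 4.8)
\emph{Strategy.} The plan is a Lyapunov (energy) argument in the spirit of Malitsky--Tam, but carried out with the Bregman functional $\phi$ in place of the squared norm, trading inner products for values of $\phi$ via the identities in Lemma \ref{phiequ}. Throughout, the iteration is well posed: since $A$ is maximally monotone on the reflexive, strictly convex, smooth space $X$, Proposition \ref{cvld} makes $J_{\lambda_n}^{A}$ single valued with full domain. Fix $u^{*}\in(A+B)^{-1}(0)$, so $-Bu^{*}\in Au^{*}$. Put $y_n:=J^{-1}\bigl(Jx_n-\lambda_nBx_n-\lambda_{n-1}(Bx_n-Bx_{n-1})\bigr)$, so $x_{n+1}=(J+\lambda_nA)^{-1}Jy_n$ gives $\tfrac{1}{\lambda_n}(Jy_n-Jx_{n+1})\in Ax_{n+1}$. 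First I would apply the $\alpha$-monotonicity of $A$ (Definition \ref{our def}) to $\bigl(x_{n+1},\tfrac{1}{\lambda_n}(Jy_n-Jx_{n+1})\bigr)$ and $(u^{*},-Bu^{*})$, and the $\beta$-monotonicity of $B$ to $x_{n+1},u^{*}$; adding $\lambda_n$ times the latter to $\lambda_n$ times the former cancels the $Bu^{*}$ terms and yields
\[
\langle x_{n+1}-u^{*},Jx_n-Jx_{n+1}\rangle+\lambda_n\langle x_{n+1}-u^{*},Bx_{n+1}-Bx_n\rangle-\lambda_{n-1}\langle x_{n+1}-u^{*},Bx_n-Bx_{n-1}\rangle\ \geq\ \lambda_n(\alpha+\beta)\langle x_{n+1}-u^{*},Jx_{n+1}-Ju^{*}\rangle.
\]
Definition \ref{our def} is exactly what makes this usable: by Lemma \ref{phiequ}(ii) the right-hand side equals $\tfrac{\lambda_n(\alpha+\beta)}{2}\bigl(\phi(x_{n+1},u^{*})+\phi(u^{*},x_{n+1})\bigr)$, which controls $\phi(u^{*},x_{n+1})$ directly, whereas Definition \ref{classical def} would only produce $\lambda_n(\alpha+\beta)\|x_{n+1}-u^{*}\|^{2}$, a quantity that cannot be reinserted into the energy estimate in a $2$-uniformly convex space.

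\emph{The energy and descent.} Next I would rewrite the leading pairing by Lemma \ref{phiequ}(iii), with $(x,y,z,w)=(x_{n+1},u^{*},x_n,x_{n+1})$, as $\tfrac12\{\phi(u^{*},x_n)-\phi(x_{n+1},x_n)-\phi(u^{*},x_{n+1})\}$, and split $\lambda_{n-1}\langle x_{n+1}-u^{*},Bx_n-Bx_{n-1}\rangle$ as $\lambda_{n-1}\langle x_n-u^{*},Bx_n-Bx_{n-1}\rangle+\lambda_{n-1}\langle x_{n+1}-x_n,Bx_n-Bx_{n-1}\rangle$ to telescope the reflected (memory) term. With the energy
\[
\Phi_n:=\phi(u^{*},x_n)-2\lambda_{n-1}\langle x_n-u^{*},Bx_n-Bx_{n-1}\rangle+\lambda_{n-1}L\|x_n-x_{n-1}\|^{2},
\]
these rearrangements, together with the Lipschitz estimate $\langle x_{n+1}-x_n,Bx_n-Bx_{n-1}\rangle\geq-L\|x_{n+1}-x_n\|\,\|x_n-x_{n-1}\|$ and $2ab\le a^{2}+b^{2}$ (which cancels the memory term against $\lambda_{n-1}L\|x_n-x_{n-1}\|^{2}$), give
\[
\Phi_n-\Phi_{n+1}\ \geq\ \phi(x_{n+1},x_n)-(\lambda_{n-1}+\lambda_n)L\|x_{n+1}-x_n\|^{2}+\lambda_n(\alpha+\beta)\bigl(\phi(x_{n+1},u^{*})+\phi(u^{*},x_{n+1})\bigr).
\]
By Lemma \ref{phiinequ1}, $\phi(x_{n+1},x_n)\geq\tfrac1\mu\|x_{n+1}-x_n\|^{2}$, while $\lambda_n\le\tfrac{1-2\epsilon}{2\mu L}$ gives $(\lambda_{n-1}+\lambda_n)L\le\tfrac{1-2\epsilon}{\mu}$; hence $\Phi_n-\Phi_{n+1}\geq\tfrac{2\epsilon}{\mu}\|x_{n+1}-x_n\|^{2}+\epsilon(\alpha+\beta)\phi(u^{*},x_{n+1})\geq0$. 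A completion-of-square bound on the cross term in $\Phi_n$ (again using $\lambda_{n-1}L\le\tfrac{1-2\epsilon}{2\mu}$ and Lemma \ref{phiinequ1}) shows $\Phi_n\geq\tfrac{1+2\epsilon}{2\mu}\|u^{*}-x_n\|^{2}\geq0$. Thus $\{\Phi_n\}$ is nonincreasing and bounded below, so it converges; consequently $\|x_{n+1}-x_n\|\to0$ and, since $\lambda_n\ge\epsilon$ and $\alpha+\beta>0$, $\phi(u^{*},x_{n+1})\to0$, whence $x_n\to u^{*}$ strongly by Lemma \ref{phiinequ1}.

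\emph{Linear rate.} For the $R$-linear estimate I would bound $\Phi_{n+1}$ from above: applying $2ab\le a^{2}+b^{2}$ to $-2\lambda_n\langle x_{n+1}-u^{*},Bx_{n+1}-Bx_n\rangle$ and using $\lambda_nL\mu\le\tfrac{1-2\epsilon}{2}$ with $\|x_{n+1}-u^{*}\|^{2}\le\mu\,\phi(u^{*},x_{n+1})$ and $2\lambda_nL\le1$ gives $\Phi_{n+1}\le\tfrac32\phi(u^{*},x_{n+1})+\|x_{n+1}-x_n\|^{2}$. Combining this with $\phi(u^{*},x_{n+1})\le\tfrac{1}{\epsilon(\alpha+\beta)}(\Phi_n-\Phi_{n+1})$ and $\|x_{n+1}-x_n\|^{2}\le\tfrac{\mu}{2\epsilon}(\Phi_n-\Phi_{n+1})$, both read off from the descent inequality, yields $\Phi_{n+1}\le c(\Phi_n-\Phi_{n+1})$ with $c=\tfrac{3}{2\epsilon(\alpha+\beta)}+\tfrac{\mu}{2\epsilon}$, i.e. $\Phi_{n+1}\le\tfrac{c}{1+c}\Phi_n$. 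Iterating and using $\|u^{*}-x_n\|^{2}\le\tfrac{2\mu}{1+2\epsilon}\Phi_n$ gives $\|u^{*}-x_n\|\le C\,q^{n}$ with $q=\sqrt{c/(1+c)}<1$, which is $R$-linear convergence. I expect the delicate step to be the simultaneous calibration of the energy: the weight $\lambda_{n-1}L\|x_n-x_{n-1}\|^{2}$ must be large enough to absorb the reflected pairing in the descent step, yet small enough relative to the $2$-uniform convexity constant $\mu$ to keep $\Phi_n$ comparable to $\|u^{*}-x_n\|^{2}$ from below; this is exactly where the step-size restriction $\lambda_n\le\tfrac{1-2\epsilon}{2\mu L}$ is used, the Banach-space analogue of the Hilbert-space bound $\lambda_n\le\tfrac{1-2\epsilon}{2L}$ degraded by the geometric loss $\mu$.
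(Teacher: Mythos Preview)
Your proposal is correct and follows essentially the same route as the paper: both arguments apply the $\alpha$- and $\beta$-monotonicity in the sense of Definition~\ref{our def}, convert the duality pairings to $\phi$-values via Lemma~\ref{phiequ}, control the reflected cross term by the Lipschitz bound and $2ab\le a^2+b^2$, and invoke Lemma~\ref{phiinequ1} together with the step-size restriction $\lambda_n\mu L<\tfrac12$ to obtain a contracting Lyapunov sequence. The only cosmetic difference is the packaging of the energy: the paper uses $a_n+b_n=\phi(x^*,x_n)+2\lambda_{n-1}\langle Bx_n-Bx_{n-1},x^*-x_n\rangle+\tfrac12\phi(x_n,x_{n-1})$ and extracts the explicit factor $\theta=\min\{1+\alpha+\beta-\epsilon,\,1+\tfrac{\epsilon}{2}\}$, whereas you take $\Phi_n$ with the norm-squared memory term $\lambda_{n-1}L\|x_n-x_{n-1}\|^2$ and obtain the rate via $\Phi_{n+1}\le c(\Phi_n-\Phi_{n+1})$; both yield $R$-linear convergence with comparable constants.
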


\begin{proof}
  Let \(x^{*} \in(A+B)^{-1}(0)\) . So that
  \begin{align}\label{Ax*}
    -B x^{*} \in A x^{*}.
  \end{align}
From (\ref{frb}) we have that
\begin{align}\label{Axn+1}
  \frac{1}{\lambda_{n}}\left(J x_{n}-\lambda_{n} B x_{n}-\lambda_{n-1}\left(B x_{n}-B x_{n-1}\right)-J x_{n+1}\right) \in A x_{n+1}
\end{align}
Using (\ref{Ax*}), (\ref{Axn+1}) and the strong monotonicity of $A$, we obtain
\begin{equation}\label{A m}
\begin{aligned}
  &\left< J x_{n+1}-J x_{n}+\lambda_{n}\left(B x_{n}-B x^{*}\right)+\lambda_{n-1}\left(B x_{n}-B x_{n-1}\right), x^{*}-x_{n+1}\right>\\
  \geq& \alpha\langle x^{*}-x_{n+1}, Jx^{*}-Jx_{n+1}\rangle
\end{aligned}
\end{equation}
By Lemma \ref{phiequ}(1), we have
\begin{align}\label{J 1}
  2\left< J x_{n+1}-J x_{n}, x^{*}-x_{n+1}\right>=\phi\left(x^{*}, x_{n}\right)-\phi\left(x^{*}, x_{n+1}\right)-\phi\left(x_{n+1}, x_{n}\right).
\end{align}
Also
\begin{equation}\label{B 1}
  \begin{aligned} & \left< B x_{n}-B x^{*}, x^{*}-x_{n+1}\right> \\
  =&\left< B x_{n+1}-B x^{*}, x^{*}-x_{n+1}\right>+\left< B x_{n}-B x_{n+1}, x^{*}-x_{n+1}\right>.
  \end{aligned}
\end{equation}
And
\begin{equation}\label{B 2}
  \begin{aligned} & \left< B x_{n}-B x_{n-1}, x^{*}-x_{n+1}\right> \\
   =&\left< B x_{n}-B x_{n-1}, x^{*}-x_{n}\right>+\left< B x_{n}-B x_{n-1}, x_{n}-x_{n+1}\right>.
  \end{aligned}
\end{equation}
Notice that $B$ is $\beta$-monotone, we have
\begin{align}\label{B monotone}
  \left< B x_{n+1}-B x^{*}, x^{*}-x_{n+1}\right> \leq -\beta\left< x^{*}-x_{n+1}, Jx^{*}-Jx_{n+1}\right>.
\end{align}
\begin{equation}\label{J 2}
  \begin{aligned}
  (\alpha+\beta)\langle x^{*}-x_{n+1}, Jx^{*}-Jx_{n+1}\rangle &= \frac{\alpha+\beta}{2}\phi(x^*, x_{n+1}) + \frac{\alpha+\beta}{2}\phi(x^*, x_{n+1})\\
  &\geq \frac{\alpha+\beta}{2}\phi(x^*, x_{n+1}).
\end{aligned}
\end{equation}

Substituting (\ref{J 1}), (\ref{B 1}) and (\ref{B 2}) in (\ref{A m}), and since (\ref{B monotone}) (\ref{J 2}), we have:
\begin{equation}\label{frb ine1}
  \begin{aligned}
  & (1+\frac{\alpha+\beta}{2})\phi\left(x^{*}, x_{n+1}\right)+2 \lambda_{n}\left< B x_{n+1}-B x_{n}, x^{*}-x_{n+1}\right> \\
  \leq &\phi\left(x^{*}, x_{n}\right)+2 \lambda_{n-1}\left< B x_{n}-B x_{n-1}, x^{*}-x_{n}\right> \\
  &+2 \lambda_{n-1}\left< B x_{n}-B x_{n-1}, x_{n}-x_{n+1}\right>-\phi\left(x_{n+1}, x_{n}\right).
  \end{aligned}
\end{equation}
Rearranging the equation we have
\begin{equation}\label{frb ine2}
  \begin{aligned}
  & (1+\frac{\alpha+\beta}{2})\phi\left(x^{*}, x_{n+1}\right)+2 \lambda_{n}\left< B x_{n+1}-B x_{n}, x^{*}-x_{n+1}\right>+\phi\left(x_{n+1}, x_{n}\right)\\
  \leq &\phi\left(x^{*}, x_{n}\right)
   +2 \lambda_{n-1}\left< B x_{n}-B x_{n-1}, x^{*}-x_{n}\right>
   +2 \lambda_{n-1}\left< B x_{n}-B x_{n-1}, x_{n}-x_{n+1}\right>
   \end{aligned}
\end{equation}
Using the Lipschitz property of $B$ and Lemma \ref{phiinequ1} we have
\begin{equation}\label{B 3}
  \begin{aligned}
  & 2 \lambda_{n-1}\left< B x_{n}-B x_{n-1}, x_{n}-x_{n+1}\right> \\
  \leq & 2 \lambda_{n-1}\left\| B x_{n}-B x_{n-1}\right\| \left\| x_{n}-x_{n+1}\right\| \\
  \leq & 2 \lambda_{n-1}L\left\| x_{n}-x_{n-1}\right\| \left\| x_{n}-x_{n+1}\right\| \\
  \leq & \lambda_{n-1} L\left(\left\| x_{n}-x_{n-1}\right\| ^{2}+\left\| x_{n}-x_{n+1}\right\| ^{2}\right) \\
  \leq & \lambda_{n-1} \mu L\left(\phi\left(x_{n}, x_{n-1}\right)+\phi\left(x_{n+1}, x_{n}\right)\right).
  \end{aligned}
\end{equation}
Substituting (\ref{B 3}) in (\ref{frb ine2}) we have
\begin{equation}\label{frb ine3}
  \begin{aligned}
  & (1+\frac{\alpha+\beta}{2})\phi\left(x^{*}, x_{n+1}\right)+2 \lambda_{n}\left< B x_{n+1}-B x_{n}, x^{*}-x_{n+1}\right> \\
  & \quad+\left(1-\lambda_{n-1} \mu L\right) \phi\left(x_{n+1}, x_{n}\right) \leq \phi\left(x^{*}, x_{n}\right) \\
  & \quad+2 \lambda_{n-1}\left< B x_{n}-B x_{n-1}, x^{*}-x_{n}\right>+\lambda_{n-1} \mu L \phi\left(x_n, x_{n-1}\right) . \end{aligned}
\end{equation}
Now, \(\lambda_{n} \in[\epsilon, \frac{1-2 \epsilon}{2 \mu L}]\) implies \(\lambda_{n} \mu L<\frac{1}{2}\) and \(\frac{1}{2}+\epsilon \leq 1-\lambda_{n} \mu L\) . So that (\ref{frb ine3}) becomes
\begin{equation}\label{frb ine4}
  \begin{aligned}
  & (1+\frac{\alpha+\beta}{2})\phi\left(x^{*}, x_{n+1}\right)+2 \lambda_{n}\left< B x_{n+1}-B x_{n}, x^{*}-x_{n+1}\right> \\
  & \quad+\left(\frac{1}{2}+\epsilon\right) \phi\left(x_{n+1}, x_{n}\right) \leq \phi\left(x^{*}, x_{n}\right) \\
  & \quad+2 \lambda_{n-1}\left< B x_{n}-B x_{n-1}, x^{*}-x_{n}\right>+\frac{1}{2} \phi\left(x_n, x_{n-1}\right)
  \end{aligned}
\end{equation}
Let \( a_{n+1} = \frac{1}{2}\phi(x^*, x_{n+1}) \), \( b_{n+1} = \frac{1}{2}\phi(x^*, x_{n+1}) + 2\lambda_n \langle Bx_{n+1} - Bx_n, x^* - x_{n+1} \rangle + \frac{1}{2}\phi(x_{n+1}, x_n) \).
Then we have
\[
b_{n+1} \geq \left( \frac{1}{2} - \lambda_n \mu L \right)\phi(x^*, x_{n+1}) + \left( \frac{1}{2} - \lambda_n \mu L \right)\phi(x_{n+1}, x_n)\geq 0.
\]
And
(\ref{frb ine4}) is equivalent to
\begin{align}\label{frb ine5}
  (1+\alpha+\beta)a_{n+1} + b_{n+1} + \varepsilon \phi(x_{n+1}, x_n) \leq a_n + b_n.
\end{align}
Since
\begin{equation}\label{bn+1}
\begin{aligned}
b_{n+1} \leq& \left( \frac{1}{2} + \lambda_n \mu L \right)\phi(x^*, x_{n+1}) + \left( \frac{1}{2} + \lambda_n \mu L \right)\phi(x_{n+1}, x_n) \\
=& (1 + 2\lambda_n \mu L) \cdot \frac{1}{2}\phi(x^*, x_{n+1}) + \left( \frac{1}{2} + \lambda_n \mu L \right)\phi(x_{n+1}, x_n) \\
\leq& (1 + 2\lambda_n \mu L)a_{n+1} + \phi(x_{n+1}, x_n).
\end{aligned}
\end{equation}
Then by (\ref{frb ine5}) and (\ref{bn+1}), we have
\begin{equation}\label{frb ine6}
  \begin{aligned}
  &(1+\alpha+\beta)a_{n+1} + b_{n+1} + \varepsilon \phi(x_{n+1}, x_n)\\
  =& (1+\alpha+\beta)a_{n+1} + (1+\frac{\epsilon}{2})b_{n+1} - \frac{\epsilon}{2}b_{n+1} + \varepsilon \phi(x_{n+1}, x_n)\\
  \geq & (1+\alpha+\beta - \frac{\epsilon}{2}(1 + 2\lambda_n \mu L))a_{n+1} + (1+\frac{\epsilon}{2})b_{n+1}\\
  \geq & (1+\alpha+\beta -\epsilon)a_{n+1} + (1+\frac{\epsilon}{2})b_{n+1}.
\end{aligned}
\end{equation}
Then, let
$\theta = \min\{1+\alpha+\beta -\epsilon, 1+\frac{\epsilon}{2}\} > 1$,
then, by (\ref{frb ine6})
\begin{align}
  \theta (a_{n+1} + b_{n+1}) \leq a_{n} + b_{n},
\end{align}
which establishes that $x_n \to x^*$ with $\|x_{n+1} - x^*\|^2 \leq \frac{\mu}{\theta^n}(a_1 + b_1)$ by Lemma \ref{phiinequ1}.
\end{proof}

Let $\alpha>0$, and $\beta=0$, we can get the following corollary.

\begin{corollary}\label{2operators}
Let $X$ be a real 2-uniformly convex and uniformly smooth Banach space. Let \(A: X \to 2^{X^{*}}\) be a maximally monotone and $\alpha$-strong monotone operator and \(B: X \to X^{*}\) be monotone and Lipschitz. For \(x_{0}, x_{-1} \in X\) defined sequence \(x_{n}\) by
  \begin{align}\label{frb2}
    x_{n+1}=J_{\lambda_{n}}^{A} \circ J^{-1}\left(J x_{n}-\lambda_{n} B x_{n}-\lambda_{n-1}\left(B x_{n}-B x_{n-1}\right)\right),~ n \geq 0,
  \end{align}
where \(\lambda_{n} \subseteq[\epsilon, \frac{1-2 \epsilon}{2 \mu L}]\) for some \(\alpha>\epsilon>0\) and \(\mu \geq 1\) . Suppose \((A+B)^{-1}(0) \neq \emptyset\), then the sequence \({x_n}\) converges strongly to an element $x^*$ of \((A+B)^{-1}(0)\) with $\|x_{n+1} - x^*\|^2 \leq \frac{M}{\theta^n}$ for some constant $ \theta>1$ and $M$.
\end{corollary}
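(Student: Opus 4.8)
The plan is to deduce Corollary \ref{2operators} directly from the preceding theorem by specializing $\beta = 0$. First I would check that this specialization is legitimate: a monotone operator $B\colon X \to X^{*}$ is precisely a $0$-monotone operator in the sense of Definition \ref{our def}, since $\langle Bx - By,\, x - y\rangle \geq 0 = 0\cdot\langle x-y,\, Jx-Jy\rangle$ for all $x,y \in X$. With $\alpha > 0$ and $\beta = 0$ we have $\alpha + \beta = \alpha > 0$, so the theorem's hypothesis $\alpha + \beta > \epsilon > 0$ reduces exactly to the corollary's hypothesis $\alpha > \epsilon > 0$, while the admissible step-size range $[\epsilon, \frac{1-2\epsilon}{2\mu L}]$ is unchanged. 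Hence every hypothesis of the theorem is met by the data of the corollary, and iteration (\ref{frb2}) is a particular instance of (\ref{frb}).

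Next I would quote the conclusion of the theorem's proof. With the quantities $a_n = \tfrac12\phi(x^{*}, x_n) \geq 0$ and $b_n \geq 0$ built there, the theorem furnishes the one-step contraction
\[
\theta\,(a_{n+1} + b_{n+1}) \leq a_n + b_n, \qquad \theta = \min\Bigl\{\,1 + \alpha - \epsilon,\; 1 + \tfrac{\epsilon}{2}\,\Bigr\}.
\]
Since $0 < \epsilon < \alpha$, both entries of the minimum exceed $1$, so $\theta > 1$. Iterating and discarding the nonnegative term $b_{n+1}$ gives
\[
\phi(x^{*}, x_{n+1}) = 2a_{n+1} \leq 2\,(a_{n+1} + b_{n+1}) \leq \frac{2\,(a_1 + b_1)}{\theta^{n}} \;\longrightarrow\; 0 .
\]

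Finally I would convert this into a norm estimate using the geometry of $X$. Since $X$ is $2$-uniformly convex and smooth, Lemma \ref{phiinequ1} provides a constant $\mu$ with $\tfrac1\mu\|x^{*} - x_{n+1}\|^{2} \leq \phi(x^{*}, x_{n+1})$; combining with the previous display yields
\[
\|x_{n+1} - x^{*}\|^{2} \leq \mu\,\phi(x^{*}, x_{n+1}) \leq \frac{M}{\theta^{n}}, \qquad M := 2\mu\,(a_1 + b_1),
\]
which is exactly the asserted strong convergence with $R$-linear rate (taking square roots gives geometric decay of $\|x_n - x^{*}\|$ with ratio $\theta^{-1/2} < 1$). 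I expect no genuine obstacle here: the only points requiring attention are that the $\beta = 0$ case is admissible and that $\theta$ remains strictly above $1$ after the substitution, both of which are immediate; all the analytic work — the estimates combining the strong monotonicity of $A$, the monotonicity of $B$, the Lipschitz bound, and Lemma \ref{phiinequ1} — was already carried out in the proof of the theorem.
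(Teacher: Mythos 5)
Your proposal is correct and follows exactly the paper's route: the corollary is obtained by specializing the preceding theorem to $\beta=0$ (a monotone $B$ being $0$-monotone), which yields $\theta=\min\{1+\alpha-\epsilon,\,1+\tfrac{\epsilon}{2}\}>1$ and, via Lemma \ref{phiinequ1}, the bound $\|x_{n+1}-x^*\|^2\leq M/\theta^n$. Your bookkeeping of the constant $M=2\mu(a_1+b_1)$ is in fact slightly more careful than the paper's, but the argument is the same.
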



\begin{remark}
Corollary 4.2 extends Theorem 2.9 in \cite{MT1} from Hilbert spaces to Banach spaces, which constitutes one of the main contributions of this paper.
\end{remark}

\section{Conclusions}

In Hilbert spaces, the properties of the resolvent operator associated with $\alpha$-monotone operators have been extensively studied. In general Banach spaces, based on Definition \ref{classical def}, when $\alpha = 0$, the resolvent of the corresponding monotone operator has been proven to be firmly nonexpansive type. However, when $\alpha \neq 0$, it becomes considerably more difficult to establish the properties of the resolvent operator in Banach spaces directly from the $\alpha$-monotonicity given by Definition \ref{classical def}. To address this issue, this paper introduced a new definition of $\alpha$-monotonicity in smooth Banach spaces (Definition \ref{our def}). We compared and analyzed the relationship between this new definition and Definition \ref{classical def}, and then systematically proved the properties of the resolvent operator under the new definition. These properties are consistent with the existing results in the Hilbert space setting, thereby demonstrating the reasonableness and generalizability of the new definition. Furthermore, we employed the newly proposed $\alpha$-monotonicity assumption to operator splitting methods and proved that the forward-reflected-backward splitting algorithm, under suitable parameter conditions, not only achieves strong convergence but also attains $R$-linear convergence. This provides a new theoretical foundation for the study of $\alpha$-monotone operators and the convergence analysis of related numerical algorithms in general Banach spaces.

In future work, the proposed definition of $\alpha$-monotonicity can be employed to investigate the strong convergence and R-linear convergence rate of other operator splitting algorithms in general Banach spaces, such as the forward-backward-half forward splitting algorithm \cite{BAD2018}, the semi-reflected-forward-backward splitting algorithm \cite{CV2019}, and the outer reflected forward-backward splitting algorithm \cite{YZT2020}, etc.

%

\section*{Author contributions}

All authors contributed to the study conception.

\section*{Data availability statement}

No datasets were generated or analysed during the current study.

\section*{Competing interests}

The authors declare no competing interests.

\bibliographystyle{unsrt}

\end{document}